\numberwithin{equation}{section} 
\numberwithin{figure}{section} 
\theoremstyle{plain}
\newtheorem{thm}{Theorem}
  \theoremstyle{plain}
  \newtheorem{prop}[thm]{Proposition}
  \theoremstyle{plain}
  \newtheorem{cor}[thm]{Corollary}
\DeclareMathOperator{\supp}{supp}
\DeclareMathOperator{\conv}{conv}
\DeclareMathOperator{\ext}{ext}
\def\fat{\scriptscriptstyle{\textup{fat}}}
\def\thin{\scriptscriptstyle{\textup{thin}}}
\begin{document}

\title[Marginals of shift-invariant measures]{On the finite-dimensional marginals of shift-invariant measures}

\author{J.-R. Chazottes}
\address[J.-R. Chazottes]{CPHT, CNRS-\'Ecole polytechnique, 91128 Palaiseau Cedex, France}
\email[J.-R. Chazottes]{jeanrene@cpht.polytechnique.fr}
\thanks{This work is part of the project `CrystalDyn' funded by the Agence Nationale de la Recherche. M. Hochman is supported by NSF grant 0901534. The authors acknowledge L. Sadun and Robert F. Williams for
allowing the reproduction of some figures from one of their papers.}
\author{J.-M. Gambaudo}
\address[J.-M. Gambaudo]{Laboratoire J. A. Dieudonn\'e, CNRS-Universit\'e de Nice-Sophia Antipolis,
Parc Valrose, 06108 Nice Cedex 02, France }
\email[J.-M. Gambaudo]{gambaudo@unice.fr}
\author{M. Hochman}
\address[M. Hochman]{Dept. of Mathematics,
Fine Hall, Washington Rd., Princeton, NJ 08544, USA }
\email[M. Hochman]{hochman@math.princeton.edu}
\author{E. Ugalde}
\address[E. Ugalde]{Instituto de F\'{\i}sica, Universidad Aut\'onoma de San Luis Potos\'{\i},
Av. Manuel Nava \#6, Zona Universitaria,
San Luis Potos\'{\i}, S.L.P., 78290 M\'exico}
\email[E. Ugalde]{ugalde@ifisica.uaslp.mx}

\begin{abstract}
Let $\Sigma$ be a finite alphabet, $\Omega=\Sigma^{\mathbb{Z}^{d}}$
equipped with the shift action, and $\mathcal{I}$ the simplex of shift-invariant measures on $\Omega$.
We study the relation between the restriction $\mathcal{I}_n$ of  $\mathcal{I}$ to the finite cubes $\{-n,\ldots,n\}^d\subset\mathbb{Z}^d$, and the polytope of ``locally invariant'' measures $\mathcal{I}_n^{loc}$. We are especially
interested in the geometry of the convex set $\mathcal{I}_n$ which turns out to
be strikingly different when $d=1$ and when $d\geq 2$. 
A major role is played by shifts of finite type which are naturally identified with faces of $\mathcal{I}_n$, and uniquely ergodic shifts of finite type, whose unique invariant measure  gives rise to extreme points of $\mathcal{I}_n$, although in dimension $d\geq 2$ there are also extreme points which arise in other ways.
We show that $\mathcal{I}_n=\mathcal{I}_n^{loc}$ when $d=1$, but in higher dimension they differ for $n$
large enough. We also show that while in dimension one $\mathcal{I}_n$ are polytopes with rational extreme points, in higher dimensions every computable convex set occurs as a rational image of a face of $\mathcal{I}_n$ for all large enough $n$.
\end{abstract}

\maketitle

\section{Introduction}

Let $\Sigma$ be a finite alphabet, and let $\Omega=\Sigma^{\mathbb{Z}^{d}}$
denote the full $d$-dimensional shift over $\Omega$.
$\Omega$ is compact and metrizable in the product topology, and the
group $\mathbb{Z}^{d}$ acts continuously on $\Omega$ by translation.
This action is denoted by $\Theta$ and the action of $u\in\mathbb{Z}^{d}$
by $\Theta^{u}$:
\[
(\Theta^{u}x)_{v}=x_{u+v}.
\]
The translation-invariant probability measures on $\Omega$ play an
important role in dynamical systems theory, probability and
thermodynamic formalism \cite{ruellebook},
but often these objects are subtle and difficult to describe. Therefore
constructions of examples and fine analysis of lattice models in physics
often proceed by studying appropriate measures on finite lattices
which converge, as the lattices grow, to a measure on the full shift.

Thus it is natural to study the relation between the simplex of measures
on finite lattices, and the restriction to the same lattices of the
simplex of invariant measures on the full shift. Intermediate between
them is a third object, namely, the polytope of measures which are
{}``locally invariant''. In this paper we make some contributions
to the understanding of these objects. We particularly focus on identifying
their extreme points, both because these determine the geometry of the
sets, and because in the set of invariant measures on the infinite lattice they correspond to the ergodic
measures, which are the most dynamically significant; and it is reasonable
to ask if there is a similar interpretation at the finite level.
Some previous work dealing with the problem of building a process from
a finite-dimensional marginal can be found in \cite{pivato} and overlaps
with our results in Theorem \ref{thm:phantom-extreme-points-in-d-2} and Theorem \ref{thm:faces-are-non-computable}.

Let us introduce some notation. The space of Borel probability measures
on $\Omega$ is denoted by $\mathcal{P}=\mathcal{P}(\Omega)$, and
the subset of measures invariant under the action $\Theta$ is denoted
$\mathcal{I}$. The space $\mathcal{P}$ (and hence $\mathcal{I}$)
carries the weak-{*} topology when measures are identified with bounded
linear functionals on $C(\Omega)$, and both spaces are closed and convex. The set
$\mathcal{I}$ is in fact a simplex, that is, each point in it has a unique representation
as the integral of a measure supported on its extreme points, and in fact the extreme points of $\mathcal{I}$ are precisely of the ergodic invariant measures.
$\mathcal{I}$ also has the remarkable feature that its extreme points
are dense. For proofs of these statements we refer to \cite{Georgii}.

Let
\[
\Lambda_{n}=\{-n,\ldots,n\}^{d}\subseteq\mathbb{Z}^{d}
\]
and let 
\[
\Omega_{n}=\Sigma^{\Lambda_{n}}.
\]
The finite-dimensional simplex of probability measures on $\Omega_n$
is denoted by $\mathcal{P}_{n}=\mathcal{P}(\Omega_{n})$. Let $\pi_{n}:\Omega\rightarrow\Omega_{n}$
denote the restriction operator $x\mapsto x|_{\Lambda_{n}}$, and
also denote by $\pi_{n}$ the induced map $\mathcal{P}\rightarrow\mathcal{P}_{n}$.
The image of $\mathcal{I}$ under this map is denoted $\mathcal{I}_{n}$.
Since $\mathcal{I}$ is compact and convex, and since $\pi_{n}$ is
continuous and linear, $\mathcal{I}_{n}$ is a closed convex subset of the finite-dimensional
vector space $\mathcal{P}_{n}$.

Another space of interest is the space $\mathcal{I}_{n}^{loc}\subseteq \mathcal{P}_{n}$ of
locally invariant measures. To define these,
we define a pattern to be a partial configuration $a:E\rightarrow\Sigma$,
for a finite set $E\subseteq\mathbb{Z}^{d}$. Translation $\Theta^{u}a$
of a pattern $a$ on $E$ is the pattern on $E-u$ defined by $(\Theta^{u}a)_{v}=a_{v+u}$.
The cylinder set $[a]\subseteq\Omega$ associated to $a\in\Sigma^{E}$
is
\[
[a]=\{x\in\Omega\,:\, x|_{E}=a\}
\]
and when $E\subseteq\Lambda_{n}$ we denote
\[
[a]_{n}=\{b\in\Omega_{n}\,:\, b|_{E}=a\}.
\]
Note that $[\Theta^{u}a]_{n}$ is defined for $a:E\rightarrow\Sigma$
as long as $E-u\subseteq\Lambda_{n}$.

A measure $\mu\in\mathcal{P}$ is invariant under $\Theta$ if $\mu(\Theta^{u}[a])=\mu([a])$
for every $u\in\mathbb{Z}^{d}$ and every pattern $a$. Similarly,
local invariance of $\mu\in\mathcal{P}_{n}$ means that for every
$E\subseteq\Lambda_{n}$, for every $a\in\Sigma^{E}$, and for every $u\in\mathbb{Z}^{d}$ such that
$E-u\subseteq\Lambda_{n}$, we have \begin{equation}
\mu([a]_{n})=\mu([\Theta^{u}a]_{n})\label{eq:local-invariance}\end{equation}
The set of locally invariant measures
on $\Omega_{n}$ is denoted $\mathcal{I}_{n}^{loc}$. Clearly $\mathcal{I}_{n}\subseteq\mathcal{I}_{n}^{loc}\subseteq\mathcal{P}_{n}$,
and each of these spaces is a closed, convex subset of the next. 

Recall that a polytope is the convex hull of a finite set. The intersection
of a polytope with an affine subspace is again a polytope. Note that as $E,a,n$ range over all their possible values, each
of the conditions \eqref{eq:local-invariance} amounts to intersecting
$\mathcal{P}_{n}$ with a linear subspace. If we choose as a basis for $\mathcal{P}_{n}$
the probability measures which give mass to a single cylinder set
$[a]_{n},$ $a\in\Omega_{n}$, then these subspaces are defined by
linear equations with integer coefficients. It follows that:
\begin{thm}
\label{thm:rational-extreme-points}$\mathcal{I}_{n}^{loc}$ is a
polytope whose extreme points are measures with rational range, that
is $\mu(A)\in\mathbb{Q}$ for all $A\subseteq\Omega_{n}$.
\end{thm}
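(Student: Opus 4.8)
The plan is to put $\mathcal{I}_n^{loc}$ into an explicit coordinate system in which it becomes a polyhedron cut out by inequalities and equations with integer coefficients, and then to invoke the classical fact that such a polyhedron has rational vertices. Since $\Omega_n$ is a finite set, I would first identify $\mathcal{P}_n$ with the standard simplex in the vector space $\mathbb{R}^{\Omega_n}$ via the correspondence $\mu\mapsto\bigl(\mu(\{b\})\bigr)_{b\in\Omega_n}$; in these coordinates the simplex is defined by the constraints $\mu(\{b\})\geq 0$ for each $b\in\Omega_n$ together with $\sum_{b\in\Omega_n}\mu(\{b\})=1$, all of which have coefficients in $\{0,1\}$.

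The next step is to rewrite each local invariance condition \eqref{eq:local-invariance} as a linear equation in these coordinates. For fixed $E\subseteq\Lambda_n$, $a\in\Sigma^E$ and $u$ with $E-u\subseteq\Lambda_n$, one has $\mu([a]_n)=\sum_{b\,:\,b|_E=a}\mu(\{b\})$ and likewise $\mu([\Theta^u a]_n)=\sum_{b\,:\,b|_{E-u}=\Theta^u a}\mu(\{b\})$, so \eqref{eq:local-invariance} becomes the vanishing of a linear form whose coefficients lie in $\{-1,0,1\}$. I would then observe that only finitely many of these conditions are genuinely distinct: $E$ ranges over the finitely many subsets of $\Lambda_n$, $a$ over the finite set $\Sigma^E$, and, since $E$ and $\Lambda_n$ are both bounded, the requirement $E-u\subseteq\Lambda_n$ confines $u$ to a finite set. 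Consequently $\mathcal{I}_n^{loc}$ is the intersection of the simplex $\mathcal{P}_n$ with finitely many hyperplanes having integer coefficients; by the remark preceding the theorem (the intersection of a polytope with an affine subspace is a polytope) this already establishes that $\mathcal{I}_n^{loc}$ is a polytope.

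For the rationality of the extreme points I would appeal to the standard fact that a bounded polyhedron $\{x:Ax\leq b,\ Cx=d\}$ with integer data $A,b,C,d$ has only rational vertices: each vertex is the unique point at which some subfamily of the defining constraints, with linearly independent normals spanning the ambient space, holds with equality, hence it is the unique solution of an invertible square integer linear system and is rational by Cramer's rule. Applying this with the integer inequality system $\mu(\{b\})\geq 0$ and the integer equality system consisting of $\sum_{b}\mu(\{b\})=1$ together with the invariance equations shows that every extreme point $\mu$ satisfies $\mu(\{b\})\in\mathbb{Q}$ for all $b$. The rational range then follows immediately, since for any $A\subseteq\Omega_n$ one has $\mu(A)=\sum_{b\in A}\mu(\{b\})$, a finite sum of rationals. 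I do not expect a genuine obstacle here: the entire content is the rational-vertex lemma, and the only point demanding care is the bookkeeping of the previous paragraph confirming that the constraint system is finite and has integer coefficients.
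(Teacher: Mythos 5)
Your proposal is correct and follows essentially the same route as the paper: both identify $\mathcal{P}_n$ with the standard simplex in $\mathbb{R}^{\Omega_n}$, observe that the local invariance conditions \eqref{eq:local-invariance} are finitely many linear equations with integer (indeed $\{-1,0,1\}$) coefficients, and deduce that $\mathcal{I}_n^{loc}$ is a polytope with rational vertices. The only cosmetic difference is that you invoke the standard rational-vertex lemma for integer polyhedra via active constraints and Cramer's rule, while the paper phrases the same fact as each extreme point being the unique intersection of the invariance subspace $\mathcal{W}_n$ with a face of $\mathcal{P}_n$, hence the unique solution of an integer linear system and therefore rational.
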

The last statement holds independent of the dimension. However, the
nature of these extreme points differs quite drastically in dimension
1 and in higher dimensions. 
\begin{thm}
\label{thm:extreme-points-are-periodic-orbits-in-d-1}
For $d=1$, every extreme point of $\mathcal{I}_{n}^{loc}$ is the projection of
the uniform measure on a $\Theta$-periodic orbit. In particular, $\mathcal{I}_{n}=\mathcal{I}_{n}^{loc}$.
\end{thm}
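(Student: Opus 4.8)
The plan is to reinterpret local invariance in dimension one as a flow‑conservation condition on a finite directed graph, and then to read off the extreme points from the classical cycle decomposition of flows. Consider the Rauzy (higher‑block) graph $G_n$ whose vertex set is $\Sigma^{2n}$, the words of length $2n$, and which has one directed edge for each word $w\in\Omega_n=\Sigma^{2n+1}$, running from the length‑$2n$ prefix of $w$ to its length‑$2n$ suffix. A measure $\mu\in\mathcal{P}_n$ is then the same thing as an assignment of nonnegative weights $\mu(w)$ to the edges $w$ with $\sum_w\mu(w)=1$. I would first observe that one particular instance of \eqref{eq:local-invariance}---taking $E=\{-n+1,\dots,n\}$ and $u=1$, so that $E-u=\{-n,\dots,n-1\}$---asserts that the marginal of $\mu$ on the length‑$2n$ suffix window equals its marginal on the length‑$2n$ prefix window. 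Written out, this is $\sum_{a\in\Sigma}\mu(ab)=\sum_{c\in\Sigma}\mu(bc)$ for every $b\in\Sigma^{2n}$, which is exactly Kirchhoff's law: at every vertex $b$ of $G_n$ the total incoming weight equals the total outgoing weight. Hence every $\mu\in\mathcal{I}_n^{loc}$ is a normalized circulation on $G_n$, and I denote by $\mathcal{F}$ the polytope of all such normalized circulations, so that $\mathcal{I}_n\subseteq\mathcal{I}_n^{loc}\subseteq\mathcal{F}$.

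The heart of the argument is the description of $\ext(\mathcal{F})$. I would invoke the classical fact that the cone of nonnegative circulations on a finite directed graph is generated by the indicator flows of simple directed cycles: given any nonzero circulation, its support contains a simple cycle $\gamma$ (conservation forbids sources and sinks, so following edges one must eventually close up), and subtracting $\min_{e\in\gamma}\mu(e)$ times $\mathbf{1}_\gamma$ yields a circulation with strictly smaller support; iterating decomposes $\mu$ as a nonnegative combination of simple‑cycle indicators. Consequently the extreme rays of the circulation cone are precisely the simple cycles, and the extreme points of $\mathcal{F}$ are the normalized indicators $\frac1p\mathbf{1}_\gamma$, where $\gamma$ is a simple cycle of length $p$. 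Now a simple cycle $b_0\to b_1\to\cdots\to b_{p-1}\to b_0$ in $G_n$ is exactly the window trajectory of a $\Theta$‑periodic point: overlap‑concatenating the blocks produces a periodic sequence, and because the vertices $b_i$ are distinct its least period is exactly $p$. Its orbit therefore consists of $p$ distinct points whose length‑$(2n+1)$ windows are the $p$ distinct edges of $\gamma$, and $\pi_n$ of the uniform measure on this orbit is precisely $\frac1p\mathbf{1}_\gamma$. Thus every extreme point of $\mathcal{F}$ is the projection of the uniform measure on a periodic orbit, and in particular belongs to $\mathcal{I}_n$.

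Finally I would close the loop. Since $\mathcal{F}$ is a polytope it equals $\conv(\ext(\mathcal{F}))$, and as just shown $\ext(\mathcal{F})\subseteq\mathcal{I}_n$; because $\mathcal{I}_n$ is convex this forces $\mathcal{F}\subseteq\mathcal{I}_n$. Combined with the inclusions $\mathcal{I}_n\subseteq\mathcal{I}_n^{loc}\subseteq\mathcal{F}$ from the first paragraph, all three sets coincide: $\mathcal{I}_n=\mathcal{I}_n^{loc}=\mathcal{F}$. In particular $\mathcal{I}_n=\mathcal{I}_n^{loc}$, and the extreme points of $\mathcal{I}_n^{loc}$ are exactly those of $\mathcal{F}$, namely the projections of uniform measures on periodic orbits, as claimed.

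I expect the main obstacle to be the extreme‑point analysis of $\mathcal{F}$ rather than the graph reformulation: one must argue carefully that the extreme rays of the circulation cone are exactly the simple cycles (the support‑reduction step, together with the fact that intersecting with the hyperplane $\sum_w\mu(w)=1$ turns extreme rays into extreme points), and one must do the least‑period bookkeeping to be sure a simple cycle really corresponds to a periodic orbit whose projected empirical measure is uniform of mass $1/p$---so that it is a genuine extreme point and not a coarser average. The reduction of local invariance to a single Kirchhoff identity is then comparatively routine, though it is worth checking that this one instance of \eqref{eq:local-invariance} already suffices for the containment $\mathcal{I}_n^{loc}\subseteq\mathcal{F}$.
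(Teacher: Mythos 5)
Your proof is correct, but it takes a genuinely different route from the paper's. The paper first shows that every $\mu\in\mathcal{I}_n^{loc}$ (for $d=1$) extends to a stationary multi-step Markov measure, built from the stochastic matrix $p_{a,b}=\mu([aa']_n)/\sum_{a''}\mu([aa'']_n)$ on the same de Bruijn-type graph you use; this already yields $\mathcal{I}_n=\mathcal{I}_n^{loc}$. It then handles extreme points by a separate perturbation argument: if $\mu$ is extreme, there is a face $\mathcal{F}$ of $\mathcal{P}_n$ with $\mathcal{W}_n\cap\mathcal{F}=\{\mu\}$, and if the transition graph of the Markov extension were not a single cycle one could take a positive Markov measure on a proper subcycle, whose projection $\nu$ would have $\supp\nu\subseteq\supp\mu$, hence $\nu\in\mathcal{W}_n\cap\mathcal{F}=\{\mu\}$, contradicting $\nu(A)=0<\mu(A)$ for some $A$. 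You instead bypass Markov measures entirely: you identify one instance of the local-invariance equations with Kirchhoff's law on the de Bruijn graph, invoke the classical decomposition of nonnegative circulations into simple-cycle flows, match normalized simple cycles with projections of uniform measures on periodic orbits (your least-period bookkeeping via distinctness of the cycle's vertices is exactly right and necessary), and then get both conclusions at once from $\ext(\mathcal{F})\subseteq\mathcal{I}_n\subseteq\mathcal{I}_n^{loc}\subseteq\mathcal{F}$. Your route buys two things: the flow decomposition identifies the extreme points constructively rather than by contradiction, and it shows that the single Kirchhoff instance of \eqref{eq:local-invariance} already implies all the others in dimension one --- a small strengthening the paper does not state. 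What the paper's route buys is the Markov extension itself, with its support and irreducibility properties, which the paper exploits in the remarks following the construction (e.g.\ the monotonicity of supports under $\supp\nu\subseteq\supp\mu$); your argument only produces lifts that are convex combinations of periodic-orbit measures, which suffices for the theorem but gives less structural information about extensions.
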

Note however that $\mathcal{I}_{n}^{loc}$ is not a simplex, and measures
may not have a unique decomposition as convex combinations of extreme
points; thus in this sense the analogy of extreme points and ergodic
measures is false. Here is a simple example demonstrating this. Consider
the case $d=1$ and $n=1$, so we are considering measures on words of length
$|\Lambda_1|=3$. Let $\Sigma=\{1,2,3,4\}$. Let $\mu\in\mathcal{P}_{3}$ be the
uniform measure on sequences in which no symbol appears twice. Let
$S_{3}$ and $S_{4}$ denote the sets of sequences of length $3$
and $4$, respectively, in which no symbol appears twice. To each
$a\in S_{3}\cup S_{4}$ we can associate the $\Theta$-invariant measure $\mu_{a}\in\mathcal{I}$ supported
on the orbit of the periodic point $\ldots aaa\ldots\in\Omega$. One may
verify directly, or using Proposition \ref{pro:SFTs-and-faces} below,
that $\pi\mu_{a}$ is an extreme point in $\mathcal{I}_{1}$. It is then
clear that
\[
\mu=\frac{1}{|S_{3}|}\sum_{a\in S_{3}}\mu_{a}=\frac{1}{|S_{4}|}\sum_{b\in S_{4}}\mu_{b}.
\]
By replacing individual symbols by sequences of a fixed length which
have the property that any subsequence of a concatenation of them
has a unique parsing, we can construct, over a given alphabet, examples
like this for any $n$.

In contrast to Theorem \ref{thm:extreme-points-are-periodic-orbits-in-d-1},
in higher dimensions we have the following. 
\begin{thm}
\label{thm:phantom-extreme-points-in-d-2}For $d\geq2$ the map
$\pi_{n}:\mathcal{I}\rightarrow\mathcal{I}_{n}^{loc}$
is not onto. In particular there are extreme points of $\mathcal{I}_{n}^{loc}$
which do not correspond to invariant measures.
\end{thm}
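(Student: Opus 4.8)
The plan is to prove the equivalent statement that $\mathcal{I}_n\subsetneq\mathcal{I}_n^{loc}$ for all sufficiently large $n$, since $\pi_n(\mathcal{I})=\mathcal{I}_n$ and $\mathcal{I}_n\subseteq\mathcal{I}_n^{loc}$ always, so non-surjectivity of $\pi_n$ onto $\mathcal{I}_n^{loc}$ is exactly this strict inclusion. Once strictness is established, the existence of a phantom extreme point is formal: by Theorem \ref{thm:rational-extreme-points} the set $\mathcal{I}_n^{loc}$ is a polytope, hence the convex hull of its finitely many extreme points, so if every extreme point lay in the closed convex set $\mathcal{I}_n$ we would get $\mathcal{I}_n^{loc}=\conv(\ext\mathcal{I}_n^{loc})\subseteq\mathcal{I}_n$, contradicting strictness. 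Thus some extreme point $e$ of $\mathcal{I}_n^{loc}$ is not in $\mathcal{I}_n$, and since $\pi_n(\mathcal{I})=\mathcal{I}_n$ this $e$ is not the projection of any invariant measure.

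To produce a point of $\mathcal{I}_n^{loc}\setminus\mathcal{I}_n$, I would exploit the correspondence between shifts of finite type and faces (Proposition \ref{pro:SFTs-and-faces}). Fix a set $A\subseteq\Omega_n$ of ``allowed'' windows, let $X_A\subseteq\Omega$ be the SFT of all configurations whose every $\Lambda_n$-window lies in $A$, and consider the faces $\{\mu\in\mathcal{I}_n^{loc}:\mu(\Omega_n\setminus A)=0\}$ and $\{\mu\in\mathcal{I}_n:\mu(\Omega_n\setminus A)=0\}$. The key observation is that a measure $\mu\in\mathcal{I}_n$ with $\mu(A)=1$ lifts to an invariant $\nu$ with $\nu(x|_{\Lambda_n+u}\in A)=1$ for every $u$, hence $\nu(X_A)=1$; so if $X_A$ carries no invariant measure at all---in particular if $X_A=\emptyset$---then no $\mu\in\mathcal{I}_n$ is supported on $A$. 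Consequently it suffices to exhibit $A$ for which $X_A=\emptyset$ while the face of $\mathcal{I}_n^{loc}$ supported on $A$ is nonempty: any locally invariant $\mu$ with $\mu(A)=1$ is then automatically a phantom.

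The heart of the matter, and the main obstacle, is constructing such an $A$. Local invariance is only a finite system of linear moment equations on the weights $\{\mu(b):b\in A\}$, carrying no requirement that the patterns of $A$ assemble into any bi-infinite configuration; this is exactly what fails in dimension one, where Theorem \ref{thm:extreme-points-are-periodic-orbits-in-d-1} shows that every such window measure ``unrolls'' into a genuine periodic process, forcing $\mathcal{I}_n=\mathcal{I}_n^{loc}$. In dimension $\geq 2$ this unrolling can be obstructed: I would take $A$ to be the window set of a two-dimensional SFT built from an aperiodic/hierarchical Wang-tile system (Berger, Robinson) or from a Turing-machine simulation, engineered so that its patterns are locally consistent and stationary-compatible at scale $n$ (making the local-invariance linear program feasible) yet admit no bi-infinite extension (making $X_A$ empty). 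By compactness the latter is a statement at some finite scale $L\gg n$, so the two requirements live at genuinely different scales and do not conflict. A warning that guides the construction: one cannot obtain the phantom as a weak-$*$ limit of empirical window-distributions of large valid patterns, nor by averaging a single configuration over a large torus, since such limits always converge to projections of honest invariant measures; the witnessing $\mu$ must instead be extracted directly from the feasibility of the scale-$n$ linear program, with the global frustration certified separately. Simultaneously verifying feasibility of that program and emptiness of $X_A$ for an explicit gadget is where the real work lies, and it is precisely what forces $n$ to be large enough to encode the gadget.
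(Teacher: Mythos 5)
Your formal reduction is fine: since $\mathcal{I}_n^{loc}$ is a polytope (Theorem \ref{thm:rational-extreme-points}) and $\mathcal{I}_n=\pi_n(\mathcal{I})$ is closed and convex, strict inclusion $\mathcal{I}_n\subsetneq\mathcal{I}_n^{loc}$ forces a phantom extreme point; and your observation that $\mu\in\mathcal{I}_n$ with $\mu(A)=1$ lifts to an invariant measure on $X_A$ is exactly the equivalence the paper uses (it is the content of Proposition \ref{pro:SFTs-and-faces} and of the easy direction in the paper's first proof). But your proposal stops precisely at the theorem's actual content: you never produce, nor prove the existence of, a window set $A$ for which the local-invariance linear program is feasible while $X_A=\emptyset$, and the sketch you give does not work as stated. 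An aperiodic hierarchical Wang-tile system (Berger, Robinson) has \emph{nonempty} $X_A$, so it cannot serve; and for an empty $X_A$ (say, a halting Turing-machine gadget) you supply no mechanism to certify \emph{exact} feasibility of the scale-$n$ program --- indeed your own ``warning'' closes off the natural sources of exactly locally invariant measures (empirical window distributions, toral averages), and since $X_A=\emptyset$ implies by compactness a bound on the size of valid patterns, one cannot even pass to a limit of approximately invariant empirical measures. So ``extract the witnessing $\mu$ directly from the feasibility of the linear program'' is circular: feasibility is the very thing that needs proof.

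The paper escapes this trap with a non-constructive computability argument, which is the idea you are missing. Feasibility of the face $\mathcal{F}\cap\mathcal{W}_n$ is \emph{decidable} (a finite system of linear conditions with integer coefficients), and $SFT(L)\neq\emptyset$ always implies feasibility. If conversely feasibility always implied $SFT(L)\neq\emptyset$ --- which is what the negation of the theorem gives, by decomposing any $\nu\in\mathcal{F}\cap\mathcal{W}_n$ into extreme points of $\mathcal{I}_n^{loc}$, which stay in the face and by hypothesis all lift to invariant measures supported on $SFT(L)$ --- then solving the linear program would decide emptiness of SFTs, contradicting Berger's theorem. Hence for some $n$ and some $L$ the program is feasible while $SFT(L)=\emptyset$, and the phantom exists without any explicit gadget ever being exhibited. (The paper's second proof is different again and also gadget-free in your sense: by Theorem \ref{pro:SFT-with-irrational-masses} there is a uniquely ergodic SFT whose unique measure projects to an extreme point of $\mathcal{I}_n$ with an irrational ratio of cylinder masses, which by Theorem \ref{thm:rational-extreme-points} cannot be an extreme point of the rational polytope $\mathcal{I}_n^{loc}$, so the two sets differ.) You cite Berger and Robinson only as a source of tiles, not as the logical engine of the proof, and that is exactly the gap: what you defer as ``where the real work lies'' is in fact handled by undecidability, not by construction, and your proposal as written does not supply either.
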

The result was also proved by Pivato in \cite{pivato} using a result of Robinson \cite{RobinsonUndecidability}.

We may describe the relation between the sets $\mathcal{I}_{n}$
, $\mathcal{I}_{n}^{loc}$ and $\mathcal{I}$ is as follows. The restriction
maps $\pi_{n}$ are compatible in the sense that, if $\widetilde{\pi}_{n+1,n}:\mathcal{I}_{n+1}\rightarrow\mathcal{I}_{n}$
is again given by restriction, $a\mapsto a|_{\Lambda_{n}}$, then
\[
\pi_{n}=\widetilde{\pi}_{n+1,n}\pi_{n+1}.
\]
Also, the projections $(\pi_{n}(\mu))_{n=1}^{\infty}$ completely determine
$\mu\in\mathcal{P}$. Therefore $\mathcal{I}$ is the inverse limit
of the sets $\mathcal{I}_{n}$, and hence, in dimension $1$, the
inverse limit also of the  $\mathcal{I}_{n}^{loc}$. However,
for $d\geq2$ the situation is more subtle, due to the fact that the
maps $\pi_{n}$ are not onto $\mathcal{I}_{n}^{loc}$. Rather, we
have that following. For $k>n$ let $\widetilde{\pi}_{k,n}:\mathcal{I}_{k}^{loc}\rightarrow\mathcal{I}_{n}^{loc}$
be defined by restriction, as above. Then it is an easy consequence
of the definitions that 
\begin{prop}
\label{pro:intersection-of-local-invariant-measures}
$\mathcal{I}_{n}=\bigcap_{k=n}^{\infty}\widetilde{\pi}_{k,n}(\mathcal{I}_{n}^{loc})$.
\end{prop}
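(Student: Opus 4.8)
The plan is to prove the two inclusions separately, the forward one being immediate and the reverse requiring a compactness argument. For the inclusion $\mathcal{I}_{n}\subseteq\bigcap_{k\geq n}\widetilde{\pi}_{k,n}(\mathcal{I}_{k}^{loc})$, I would take $\nu=\pi_{n}(\mu)$ with $\mu\in\mathcal{I}$ and observe that $\pi_{k}(\mu)\in\mathcal{I}_{k}\subseteq\mathcal{I}_{k}^{loc}$ for every $k\geq n$. The compatibility relation $\pi_{n}=\widetilde{\pi}_{k,n}\circ\pi_{k}$, which is the iterate of the relation $\pi_{n}=\widetilde{\pi}_{n+1,n}\pi_{n+1}$ recorded above, then gives $\widetilde{\pi}_{k,n}(\pi_{k}(\mu))=\nu$, so $\nu$ lies in each of the sets being intersected.

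For the reverse inclusion, fix $\nu$ in the intersection, so that for every $k\geq n$ there is $\mu_{k}\in\mathcal{I}_{k}^{loc}$ with $\widetilde{\pi}_{k,n}(\mu_{k})=\nu$. The goal is to manufacture a single measure $\mu\in\mathcal{I}$ with $\pi_{n}(\mu)=\nu$. The obstacle is that the $\mu_{k}$ need not be mutually compatible under restriction, so they do not form an inverse system directly. I would get around this by a diagonal extraction: for each fixed level $m$, the projections $\widetilde{\pi}_{k,m}(\mu_{k})$ live in the compact finite-dimensional simplex $\mathcal{P}_{m}$, so by passing to a subsequence $k_{j}\to\infty$ and diagonalizing over the countably many levels $m$, I can arrange that $\widetilde{\pi}_{k_{j},m}(\mu_{k_{j}})$ converges to some $\lambda_{m}\in\mathcal{P}_{m}$ for every $m$. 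Continuity of restriction together with $\widetilde{\pi}_{m+1,m}\circ\widetilde{\pi}_{k,m+1}=\widetilde{\pi}_{k,m}$ forces $\widetilde{\pi}_{m+1,m}(\lambda_{m+1})=\lambda_{m}$, so $(\lambda_{m})_{m}$ is a compatible family and hence, via the inverse-limit description of $\mathcal{P}(\Omega)$ recorded above, determines a measure $\mu\in\mathcal{P}$ with $\pi_{m}(\mu)=\lambda_{m}$. Since $\widetilde{\pi}_{k,n}(\mu_{k})=\nu$ for all $k$, taking limits gives $\lambda_{n}=\nu$, that is $\pi_{n}(\mu)=\nu$.

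The main point that requires care is checking $\mu\in\mathcal{I}$, and this is where the local invariance of the $\mu_{k}$ is used. Given a pattern $a$ on a finite set $E$ and a translation vector $u$, I would choose $m$ large enough that both $E$ and $E-u$ lie in $\Lambda_{m}$. For every $k\geq m$ the defining constraint \eqref{eq:local-invariance} for $\mu_{k}\in\mathcal{I}_{k}^{loc}$ applies and yields $\mu_{k}([a]_{k})=\mu_{k}([\Theta^{u}a]_{k})$; since these cylinder masses equal the corresponding masses under $\widetilde{\pi}_{k,m}(\mu_{k})$, letting $k=k_{j}\to\infty$ gives $\lambda_{m}([a]_{m})=\lambda_{m}([\Theta^{u}a]_{m})$, i.e. $\mu([a])=\mu([\Theta^{u}a])$. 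As $a$ and $u$ are arbitrary, $\mu$ is $\Theta$-invariant, completing the reverse inclusion. The heart of the argument is thus the observation that every fixed finite invariance equation is eventually enforced by local invariance once the cube $\Lambda_{k}$ is large enough to contain the relevant patterns, so that local invariance at all scales passes in the limit to genuine global invariance.
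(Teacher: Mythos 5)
Your proof is correct: the paper states this proposition without proof, calling it ``an easy consequence of the definitions,'' and your argument --- the trivial forward inclusion, a diagonal extraction through the compact simplices $\mathcal{P}_{m}$ to build a compatible family $(\lambda_{m})$ and hence a measure $\mu\in\mathcal{P}$, and the observation that each fixed invariance equation for a pattern $a$ and vector $u$ is eventually enforced by the local invariance of $\mu_{k}$ once $E$ and $E-u$ fit inside $\Lambda_{k}$ --- is precisely the intended argument, written out in full and with all the steps (nontrivially, the non-compatibility of the $\mu_{k}$ and the passage from cylinder masses of $\widetilde{\pi}_{k,m}(\mu_{k})$ to those of $\lambda_{m}$) correctly handled. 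Note only that the proposition as printed contains a typo, $\widetilde{\pi}_{k,n}(\mathcal{I}_{n}^{loc})$ in place of $\widetilde{\pi}_{k,n}(\mathcal{I}_{k}^{loc})$, which you have silently and correctly repaired.
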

The new features of $\mathcal{I}_{n}$ which emerge in dimensions
$d\geq2$, as well as the tools used in their analysis, are closely
related to the dynamics of multidimensional shifts of finite type.
Recall that a closed, $\Theta$-invariant subset $X\subseteq\Omega$
is a shift of finite type (SFT) if there is an $n$ and a set of patterns
$L\subseteq\Omega_{n}$ such that $x\in X$ if and only if no pattern
from $L$ appears anywhere in $x$, that is, if $\pi_{n}(\Theta^{u}x)\notin L$
for every $u\in\mathbb{Z}^{d}$. In this case we write $X=SFT(L)$.
Given an SFT $X$ (or more generally any closed, $\Theta$-invariant
subset of $\Omega$) we denote by $\mathcal{I}(X)\subseteq\mathcal{I}$
the set of invariant measures supported on $X$. This is
a closed convex set.

A face of a convex set $C$ in a real vector space is the intersection of the boundary $\partial C$
with a supporting affine subspace. If this subspace is defined over the rationals, the face is said to be rational. A face consisting of one point is an exreme point, and the set of extreme points of $C$ is denoted $\ext C$. 
The relation between SFTs and $\mathcal{I}_{n}$
is given by the following simple but fundamental proposition. 
\begin{prop}
\label{pro:SFTs-and-faces}
If $X$ is an SFT defined by $L\subseteq\Omega_{n}$
then the set $\pi_{n}(\mathcal{I}(X))$ is a rational face of $\mathcal{I}_{n}$.
In particular, if $X$ is uniquely ergodic and $\mu$ is the unique
invariant measure on $X$, then $\pi_{n}(\mu)\in \ext\mathcal{I}_{n}$.
\end{prop}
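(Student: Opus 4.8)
The plan is to collapse the infinitely many constraints defining ``$\mu$ is supported on $X$'' into a single linear equation on the marginal $\pi_n\mu$, and then to recognize the resulting set as the minimizing face of a rational linear functional on $\mathcal{I}_n$. Write $B=\{x\in\Omega:\pi_n(x)\in L\}$, a finite union of cylinders. Since $X=SFT(L)$ means exactly that $x\in X$ iff $\pi_n(\Theta^u x)\notin L$ for all $u$, we have $\Omega\setminus X=\bigcup_{u\in\mathbb{Z}^d}\Theta^{-u}B$. The first step is to observe that for an invariant $\mu$ this countable union collapses: by invariance $\mu(\Theta^{-u}B)=\mu(B)$ for every $u$, so countable subadditivity yields $\mu(\Omega\setminus X)=0$ as soon as $\mu(B)=0$, while the inclusion $B\subseteq\Omega\setminus X$ gives the reverse implication. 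Hence
\[
\mu\in\mathcal{I}(X)\iff \mu(B)=0\iff \sum_{a\in L}(\pi_n\mu)([a]_n)=0.
\]

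Next I would package the right-hand side as a linear form. Define $\ell:\mathcal{P}_n\to\mathbb{R}$ by $\ell(\nu)=\sum_{a\in L}\nu([a]_n)$; in the basis of point masses on single cylinders this is a linear functional with $0$--$1$ (in particular rational) coefficients, nonnegative on all of $\mathcal{P}_n$ and hence on $\mathcal{I}_n$. Since $\ell(\pi_n\mu)=\mu(B)$ for \emph{any} $\mu$ with $\pi_n\mu=\nu$, the displayed equivalence upgrades to the identity $\pi_n(\mathcal{I}(X))=\{\nu\in\mathcal{I}_n:\ell(\nu)=0\}$. The inclusion ``$\subseteq$'' is immediate; for ``$\supseteq$'' one picks any preimage $\mu\in\mathcal{I}$ of such a $\nu$ (possible because $\mathcal{I}_n=\pi_n(\mathcal{I})$), notes $\mu(B)=\ell(\nu)=0$, and concludes $\mu\in\mathcal{I}(X)$.

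It then remains to read off the convex geometry. Assuming $X\neq\emptyset$ (otherwise $\mathcal{I}(X)=\emptyset$ and there is nothing to prove), compactness and $\Theta$-invariance of $X$ guarantee it carries an invariant measure, so the minimum of $\ell$ over $\mathcal{I}_n$ equals $0$. The minimizing set of a linear functional on a compact convex set is precisely a face: when $\ell$ is nonconstant on $\mathcal{I}_n$ the hyperplane $\{\ell=0\}$ is a genuine supporting hyperplane, the set $\{\nu\in\mathcal{I}_n:\ell(\nu)=0\}$ lies in the relative boundary of $\mathcal{I}_n$ (a relative-interior point would force $\ell$ to take negative values nearby), and so it equals $\partial\mathcal{I}_n\cap\{\ell=0\}$; rationality of the coefficients of $\ell$ makes this supporting hyperplane rational, hence the face rational. (In the degenerate case $\ell\equiv0$ on $\mathcal{I}_n$ the image is all of $\mathcal{I}_n$, a rational face by convention.) Finally, if $X$ is uniquely ergodic with invariant measure $\mu$, then $\mathcal{I}(X)=\{\mu\}$, so $\pi_n(\mathcal{I}(X))=\{\pi_n\mu\}$ is a one-point face and therefore $\pi_n\mu\in\ext\mathcal{I}_n$.

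I expect the only genuine subtlety to be this first step---the passage from the pointwise, translation-quantified membership condition for $X$ to the single central-cylinder condition $\mu(B)=0$. This is where invariance does the real work, and it must be handled with a little measure-theoretic care (subadditivity in one direction, the inclusion $B\subseteq\Omega\setminus X$ in the other). Everything after that is routine convexity, and the rationality assertion is automatic from the integrality of the coefficients of $\ell$.
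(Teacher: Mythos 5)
Your proof is correct and takes essentially the same route as the paper: both reduce ``$\mu$ is supported on $X$'' to the vanishing of the marginal on the $L$-cylinders (the invariance-plus-subadditivity step you spell out is exactly what the paper leaves implicit in its appeal to the proof of Theorem \ref{thm:phantom-extreme-points-in-d-2}), and then identify $\pi_{n}(\mathcal{I}(X))$ as the rational face of $\mathcal{I}_{n}$ cut out by those vanishing conditions --- the paper phrasing this as $\mathcal{F}\cap\mathcal{I}_{n}$ for the coordinate face $\mathcal{F}=\{\nu\in\mathcal{P}_{n}\,:\,\nu([a]_{n})=0\mbox{ for }a\in L\}$, which is precisely the zero set of your functional $\ell$. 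The uniquely ergodic case is handled identically in both arguments.
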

Theorem \ref{thm:phantom-extreme-points-in-d-2} follows from Theorem
\ref{thm:rational-extreme-points}, the proposition above, and the
following:
\begin{thm}
\label{pro:SFT-with-irrational-masses}
For $d\geq2$ there exist uniquely
ergodic SFTs and two cylinder sets $C,C'$ such that the unique invariant
measure $\mu$ satisfies $\mu(C)/\mu(C')\notin\mathbb{Q}$.
\end{thm}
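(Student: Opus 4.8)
The plan is to realize the desired measure as the unique invariant measure of a $\mathbb{Z}^2$ SFT that locally encodes a primitive aperiodic substitution tiling whose tile frequencies are in irrational ratio. The first point to settle is where the irrationality can come from. If one works with a constant-shape symbolic substitution, in which every letter is replaced by an $\ell\times\ell$ block over a fixed alphabet, then the frequency vector is the positive eigenvector of the integer incidence matrix $M$ for the eigenvalue $\ell^{2}$; since this eigenvalue is rational and (by primitivity) simple, the corresponding eigenline is the kernel of the rational matrix $M-\ell^{2}I$ and is therefore defined over $\mathbb{Q}$, forcing all frequencies to be rational. Hence I would instead start from a genuinely geometric substitution, with prototiles of different areas and an irrational linear inflation factor $\lambda$, so that the Perron eigenvalue $\lambda^{2}$ of the incidence matrix is irrational and its eigenline need not be rational. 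For concreteness one can take a Penrose-type rhombus substitution, whose two prototiles occur with frequency ratio equal to the golden mean $\varphi=(1+\sqrt{5})/2$; fix two tile types $t,t'$ with frequency ratio $\varphi\notin\mathbb{Q}$.

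The second step is to turn this geometric object into an honest point set over $\mathbb{Z}^{2}$ and to make the hierarchical structure finite-type. Here I would invoke the realization theorems for substitution systems: by Mozes, and in the tiling formulation by Goodman--Strauss, a primitive, recognizable substitution tiling is enforced by finitely many local (matching) rules, so after decorating each tile with finitely many auxiliary ``skeleton'' markers recording its place in the hierarchy one obtains a finite alphabet $\Sigma$ and a $\mathbb{Z}^{2}$ SFT $Y\subseteq\Sigma^{\mathbb{Z}^{2}}$ whose configurations are exactly the admissible decorated tilings, read off on the integer lattice. Because the underlying tiling system is primitive and linearly repetitive it is uniquely ergodic, and the decoration is a local, almost-everywhere injective recoding; this both transports unique ergodicity to $Y$ and identifies the unique invariant measure $\mu$ of $Y$ with the (normalized) frequency measure of the tiling. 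Taking $C,C'$ to be the cylinder events ``the site carries the reference marker of a tile of type $t$ (respectively $t'$)'' then gives $\mu(C)/\mu(C')=\varphi\notin\mathbb{Q}$, as required, and unique ergodicity guarantees that $\mu$ is the only invariant measure, so $Y$ is a uniquely ergodic SFT.

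The main obstacle, and the heart of the argument, is the passage from the geometric substitution tiling to the lattice SFT $Y$: one must encode an irrational-inflation hierarchy by purely local constraints, which is exactly the delicate content of the Mozes / Goodman--Strauss machinery and requires both recognizability of the substitution and a careful marker layer so that the skeleton is reconstructible from a bounded window. Two things must be checked along the way: that the added markers do not create spurious configurations carrying new invariant measures, so that unique ergodicity is genuinely preserved, and that the finite-to-one recoding does not disturb the frequency ratio, so that the irrational value $\varphi$ survives intact as the ratio of two cylinder masses. Everything else --- primitivity forcing unique ergodicity, and the identification of tile frequencies with the Perron eigenvector --- is standard substitution-tiling theory.
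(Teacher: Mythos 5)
Your strategy coincides with the paper's second proof: start from the Penrose substitution, whose two tile species occur with frequency ratio the golden mean, and transfer this to the cylinder masses of a uniquely ergodic $\mathbb{Z}^2$ SFT. Several of your preliminary observations are correct and well targeted, notably the remark that constant-shape lattice substitutions force rational frequencies, which is precisely why a genuinely geometric inflation is needed. The gap is in the pivotal step, which you assert rather than carry out: the passage from the geometric tiling space to a lattice SFT. Goodman--Strauss's theorem produces matching rules for decorated geometric prototiles, i.e.\ an $\mathbb{R}^2$ tiling space of finite type, \emph{not} a subshift of $\Sigma^{\mathbb{Z}^2}$; Mozes's theorem concerns rectangular substitutions of $\mathbb{Z}^2$ and yields soficity, with unique ergodicity of the SFT extension requiring separate verification. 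Neither result lets you ``read the decorated tiling off on the integer lattice'': Penrose tiles have vertices in a dense subgroup of $\mathbb{R}^2$, so the position of $\mathbb{Z}^2$ relative to the tiling ranges over a continuum, the induced coding at lattice sites has infinitely many local patterns (no finite alphabet suffices), and there is no reason for the image to be of finite type. A ``skeleton marker'' layer cannot repair this, since the marker at a site would have to record an irrational offset of that site within its tile.

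The paper closes exactly this gap with the Sadun--Williams machinery, which is the missing ingredient in your write-up. One first deforms the Penrose triangles so that all edge vectors become rational (then integral); this shape change is an orbit equivalence of the tiling systems, hence preserves minimality and unique ergodicity, and, because the homeomorphism carries van Hove sequences to van Hove sequences, it preserves the golden-mean ratio of per-area tile frequencies. One then replaces the straight integral edges by lattice zig-zags and colors the unit-square edges to encode the $40$ prototiles and their adjacencies, obtaining honest Wang tiles whose tiling space is the suspension of a uniquely ergodic $\mathbb{Z}^2$ SFT $X_W$. There is also a small final step you elide: after discretization a single triangle contains several unit squares, so what comes out directly is that the ratio of the \emph{sums} $\sum_{i}\mu([w^{\mathrm{fat}}_i])\big/\sum_{j}\mu([w^{\mathrm{thin}}_j])$ equals $\bigl(1+\sqrt{5}\bigr)/2$, and a pigeonhole argument is needed to extract a single pair of cylinders with irrational mass ratio (if all pairwise ratios were rational, the total ratio would be rational too). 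Your gesture toward Mozes is closer to the paper's \emph{first} proof, which adapts Theorem 4.1 of Hochman--Meyerovitch together with Mozes's machinery precisely to secure unique ergodicity --- but that route likewise requires the unique-ergodicity verification you flagged, not merely cited.
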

Alternatively there is an argument using undecidability of Wang's
tiling problem, see Section \ref{highdim}. 

As a geometric consequence, if $X$ is an SFT as in the proposition it follows that, for large enough $n$, the projection of its unique invariant measure to  $\mathcal{I}_n$ is located in the interior of a face of $\mathcal{I}_n^{loc}$, since the extreme points of the face containing the measure are rational.

We now turn to the structure of $\mathcal{I}_{n}$. While uniquely
ergodic SFTs give rise to extreme points, these do not exhaust the
possibilities:
\begin{thm}
\label{thm:non-ue-SFT-extreme-points}
For $d\geq2$ there are extreme points of
$\mathcal{I}_{n}$ which do not arise as restrictions of the unique
invariant measure on a uniquely ergodic SFT.
\end{thm}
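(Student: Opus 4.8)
The plan is a cardinality argument. Over the fixed finite alphabet $\Sigma$ there are only countably many shifts of finite type, since each is specified by a finite set of forbidden patterns and there are only countably many finite sets of patterns. Hence there are only countably many uniquely ergodic SFTs, and each contributes a single point $\pi_n(\mu_X)\in\mathcal{P}_n$, where $\mu_X$ is its unique invariant measure. The set $S_n=\{\pi_n(\mu_X):X\text{ a uniquely ergodic SFT}\}$ is therefore countable for every $n$. Consequently it suffices to produce, for some $n$ and some $d\geq 2$, an \emph{uncountable} subset of $\ext\mathcal{I}_n$: such a set cannot lie inside the countable set $S_n$, so some extreme point of $\mathcal{I}_n$ is not the restriction of the unique invariant measure of any uniquely ergodic SFT. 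For this reduction I do not even need the points of $S_n$ to be extreme.

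All the work therefore lies in showing $\ext\mathcal{I}_n$ is uncountable. I would exploit the fact, recorded in Proposition \ref{pro:SFTs-and-faces} and the surrounding discussion, that $\mathcal{I}_n$ has geometrically rich faces. The elementary observation I rely on is that if $F$ is a face of a compact convex set $C$ then $\ext F\subseteq\ext C$; thus it is enough to exhibit one face $F$ of $\mathcal{I}_n$ possessing uncountably many extreme points. To obtain such a face I would apply the realization theorem (Theorem \ref{thm:faces-are-non-computable}), according to which, for all large enough $n$, every computable convex set is a rational affine image of a face of $\mathcal{I}_n$. Taking the target to be the Euclidean closed unit disk $D\subseteq\mathbb{R}^2$, which is computable, strictly convex, and has uncountably many extreme points, yields a face $F$ of $\mathcal{I}_n$ together with a rational affine surjection $T:F\to D$.

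The final step transfers uncountability from $D$ to $F$. For each $e\in\ext D$ the set $T^{-1}(e)\cap F$ is nonempty (by surjectivity) and compact convex, so by the finite-dimensional Krein--Milman theorem it has an extreme point $x_e$. If $x_e=\tfrac12(y+z)$ with $y,z\in F$, then $e=T(x_e)=\tfrac12(Ty+Tz)$ with $Ty,Tz\in D$; extremality of $e$ in $D$ forces $Ty=Tz=e$, hence $y,z\in T^{-1}(e)\cap F$, and extremality of $x_e$ in that set forces $y=z=x_e$. Thus $x_e\in\ext F$, and since $T x_e=e$ the assignment $e\mapsto x_e$ is injective. Therefore $\ext F$, and hence $\ext\mathcal{I}_n\supseteq\ext F$, is uncountable, which completes the argument.

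The main obstacle is plainly the realization theorem itself: the content is not the cardinality bookkeeping but the construction of a face of $\mathcal{I}_n$ carrying a genuine continuum of extreme points. The combinatorial device is the same one underlying Theorem \ref{pro:SFT-with-irrational-masses}, namely encoding a prescribed (here, curved) convex constraint into the long-range compatibility conditions that, by Proposition \ref{pro:intersection-of-local-invariant-measures}, cut $\mathcal{I}_n$ out of the polytope $\mathcal{I}_n^{loc}$. If one prefers not to quote that theorem, the same conclusion should follow from a direct construction of a single SFT in which a pattern frequency can be tuned continuously along a strictly convex feasible arc; the delicate point is then verifying that every measure along the arc projects to an \emph{extreme} point of $\mathcal{I}_n$, which is cleanly handled through the supporting-hyperplane description furnished by Proposition \ref{pro:SFTs-and-faces}.
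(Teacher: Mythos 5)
Your proof is correct and follows essentially the same route as the paper's: the paper likewise realizes the unit disc (an effective convex set with uncountably many extreme points) as a rational image of a rational face of $\mathcal{I}_n$ via the realization theorem, concludes by countability of SFTs, and you merely supply the details it leaves implicit (that extreme points of a face are extreme in $\mathcal{I}_n$, and that a linear surjection onto the disc forces uncountably many extreme points upstream). One small slip: the statement you invoke is Theorem \ref{thm:face-characterization} (phrased for \emph{effective} convex sets, which the disc is), not Theorem \ref{thm:faces-are-non-computable}.
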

Thus we do not have a complete desription of the extreme points. Nevertheless we can give some precise indication of the richness
of the sets $\mathcal{I}_{n}$. For this we require two definitions.

We say that a convex set $A\subseteq\mathbb{R}^{n}$ is \emph{effective}
if there is an algorithm which, on input $k$, computes the extreme points of a rational polytope polytope
$A_{k}$ which contains $A$, and such that $A=\bigcap_{k=1}^{\infty}A_{k}$.
This class, while countable, is very broad, including e.g. all rational
polytopes, the closed convex hulls of computable and bounded sequences of points,
etc. A stronger condition is that of computability: $A$ is \emph{computable}
if there is an algorithm which produces, for each $n$, a convex polytope
within Hausdorff distance $1/n$ of $A$. There are effective sets
which are not computable.

We say that a convex set $B\subseteq\mathbb{R}^{m}$ is a rational
image of a convex set $A\in\mathbb{R}^{n}$ if there is a rational
matrix and associated linear map $T:\mathbb{R}^{n}\rightarrow\mathbb{R}^{m}$
such that $TA=B$.

\begin{thm}
\label{thm:face-characterization}
Let $d\geq2$. Then $A$ is an effective
convex set if and only if for all large enough $n$, it is a rational
image of a rational face of $\mathcal{I}_{n}$.
\end{thm}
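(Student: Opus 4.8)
**The plan for Theorem \ref{thm:face-characterization} (the equivalence of effective convex sets with rational images of rational faces of $\mathcal{I}_n$).**

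The statement is an "if and only if," so I would prove two directions. The easier direction is that every rational image of a rational face of $\mathcal{I}_n$ is effective. By Proposition \ref{pro:SFTs-and-faces}, faces of $\mathcal{I}_n$ carry dynamical meaning; more to the point, the sets $\mathcal{I}_n$ are themselves effectively approximable from the outside by the polytopes $\widetilde{\pi}_{k,n}(\mathcal{I}_n^{loc})$, whose extreme points are computable by Theorem \ref{thm:rational-extreme-points} (they are rational and are cut out by explicit integer linear equations). By Proposition \ref{pro:intersection-of-local-invariant-measures}, $\mathcal{I}_n=\bigcap_{k\ge n}\widetilde{\pi}_{k,n}(\mathcal{I}_n^{loc})$, giving exactly the nested sequence of rational polytopes required for effectivity. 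A rational face of $\mathcal{I}_n$ is the intersection of this effective set with a rational supporting subspace, which preserves effectivity, and applying a rational linear map $T$ preserves it as well (intersecting and projecting rational polytopes yields rational polytopes, computably). So this direction is essentially bookkeeping on top of the earlier results.

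The substantial direction is the converse: given an arbitrary effective convex set $A\subseteq\mathbb{R}^m$, realize it (up to a rational linear image) as a rational face of $\mathcal{I}_n$ for all large $n$. First I would normalize: since $A=\bigcap_k A_k$ for a computable decreasing sequence of rational polytopes $A_k$, I want to encode this defining data into a shift of finite type. The plan is to build a multidimensional SFT whose invariant measures, projected to a suitable window $\Lambda_n$ and then hit by a fixed rational $T$, trace out exactly $A$. The natural engine here is the same machinery behind Theorem \ref{pro:SFT-with-irrational-masses}: multidimensional SFTs can simulate Turing machines, so one can force the frequencies of certain patterns to obey any computably-presented system of constraints. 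Concretely, I would design tiles so that in any invariant measure the vector of a chosen finite family of cylinder-set measures is constrained to lie in $A$, by having the tiling run the algorithm that enumerates the half-space constraints cutting out each $A_k$ and penalize (forbid) configurations violating them at successively finer scales.

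The key mechanism I expect to be the crux is making "$\pi_n(\mathcal{I}(X))$ is a \emph{face}" do the work, via Proposition \ref{pro:SFTs-and-faces}: an SFT defined by patterns on $\Lambda_n$ projects to a rational face of $\mathcal{I}_n$. So the target face is $\pi_n(\mathcal{I}(X))$ for the simulating SFT $X$, and $T$ is the rational map reading off the chosen pattern-frequency coordinates. The main obstacle, and where I would spend the most care, is the simulation itself: I must ensure (i) that the attainable frequency vectors are \emph{exactly} $A$ and not merely a superset or a computable perturbation of it, which requires the Turing-machine-in-an-SFT construction to be both sound (no extra invariant measures sneak in with out-of-$A$ statistics) and complete (every point of $A$ is actually achieved by some invariant measure, including limit/boundary points of $A$ obtained as weak-$*$ limits); and (ii) that the whole construction lives on a single fixed window size $n$ that works uniformly for all large $n$, which forces me to absorb the unbounded computation into the infinite spatial extent of the lattice rather than into large local rules. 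Handling the completeness at boundary points of $A$ — guaranteeing invariant measures realizing every extreme point and every limit of the $A_k$ — is the delicate analytic point, and I would address it by a compactness argument passing to weak-$*$ limits of measures supported on larger and larger finite approximants inside $X$.
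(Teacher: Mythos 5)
Your \emph{only if} direction is correct and matches the paper's remark: effectivity of $\mathcal{I}_n$ via Proposition \ref{pro:intersection-of-local-invariant-measures} and the explicit integer equations cutting out the polytopes $\mathcal{I}_k^{loc}$, plus the (easy, compactness-based) facts that intersecting a nested sequence of computable rational polytopes with a rational subspace and pushing forward by a rational map preserve effectivity. The hard direction, however, has a genuine gap at exactly the point you flag as the crux and then leave unresolved: \emph{completeness}, i.e.\ producing invariant measures whose frequency vectors realize \emph{every} point of $A$. Your mechanism --- forbid configurations violating the enumerated half-space constraints at finer and finer scales --- only controls the attainable set from the outside (soundness, $\pi\mathcal{I}(X)\subseteq A$); it manufactures no measures at all. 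Your compactness remark does not repair this: weak-$*$ limits only yield points you can already approximate by measures constructed at finite stages, and you give no construction of such measures. The paper's solution is a concrete one-dimensional combinatorial device you are missing: working inside the simplex $\Delta$, it recursively builds word sets $E_n\subseteq\Sigma_0^{N_n}$ with each $E_{n+1}$-word a concatenation of $E_n$-words and with $C_n=\conv\{p(a)\,:\,a\in E_n\}$, where $p(a)$ is the empirical letter-frequency vector. This exploits the \emph{rationality} of the extreme points of each approximating polytope $C_n$ (common denominators give words realizing them exactly, and periodic orbits $\ldots aaa\ldots$ give $\ext C_n\subseteq\pi_0\mathcal{I}(X_{E_n})$), so that $\pi_0\mathcal{I}(X_{E_n})=C_n$ exactly at every stage; only then does the weak-$*$ limit argument give $\pi_0\mathcal{I}(X)=\bigcap C_n=C$ for $X=\bigcap X_{E_n}$. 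Without some analogue of this stage-by-stage exact realization, your plan proves at best $T(\text{face})\supseteq$-or-$\subseteq A$, not equality.

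There is also an architectural difference worth noting. You propose to embed the computation directly into a two-dimensional SFT, which would require you to essentially reprove a simulation theorem of the strength of \cite[Theorem 4.1]{HochmanMeyerovitch2010} with control of the full simplex of invariant measures --- a major undertaking, and the place where ``no extra invariant measures sneak in'' is genuinely hard (hierarchical simulating SFTs carry auxiliary invariant measures that must not perturb the marginal statistics). The paper instead modularizes: the frequency realization is done purely in dimension one (the $E_n$-construction above, yielding an \emph{effective} subshift $X$ with $\pi_0\mathcal{I}(X)=C$), and the passage to dimension $d\geq 2$ is outsourced to Theorem \ref{thm:subdynamics} (every effective $\mathbb{Z}$-subshift is a factor of a subaction of a $2$-dimensional SFT $Y$, with zero entropy and no periodic points so that $Y$ fits over the fixed alphabet). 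Proposition \ref{pro:SFTs-and-faces} then makes $\pi_n\mathcal{I}(Y)$ a rational face of $\mathcal{I}_n$ for all large $n$, and the sliding-block factor map and row projection supply the rational linear map $T$ --- this also resolves, for free, your worry about a fixed window working for all large $n$. If you want a completable proof, adopt this decomposition and supply the one-dimensional concatenation construction; as written, your proposal names the right obstacles but provides neither the realization mechanism nor the simulation theorem it implicitly depends on.
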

It is not hard to show that, for fixed $n$, the sets $\mathcal{I}_n$ are effective. On the other hand there exist effective non-computable convex sets, and rational images of computable
sets are computable. Thus:
\begin{thm}
\label{thm:faces-are-non-computable}
For $d\ge 2$ the sets $\mathcal{I}_{n}$ are
effective but, for large enough $n$, they are not computable.
\end{thm}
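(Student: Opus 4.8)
The plan is to prove the two assertions separately: effectiveness will follow directly from the polyhedral approximations already in hand, while non-computability will be extracted from the face characterization of Theorem~\ref{thm:face-characterization} together with the existence of an effective set that is not computable.

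For effectiveness I would exhibit the explicit presentation $A_k:=\widetilde{\pi}_{k,n}(\mathcal{I}_k^{loc})$, $k\ge n$. By Theorem~\ref{thm:rational-extreme-points} each $\mathcal{I}_k^{loc}$ is a rational polytope; since the local-invariance conditions \eqref{eq:local-invariance} and the simplex constraints have integer coefficients in the cylinder basis, its extreme points are the rational solutions of explicit linear systems and can be enumerated algorithmically. The restriction map $\widetilde{\pi}_{k,n}$ has an integer matrix, because a marginal mass is an integer sum of cylinder masses, so $A_k$ is again a rational polytope whose vertices are the images of the vertices of $\mathcal{I}_k^{loc}$, hence computable. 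Since $\mathcal{I}_n=\widetilde{\pi}_{k,n}(\mathcal{I}_k)$ and $\mathcal{I}_k\subseteq\mathcal{I}_k^{loc}$, we have $\mathcal{I}_n\subseteq A_k$ for every $k$, while Proposition~\ref{pro:intersection-of-local-invariant-measures} gives $\mathcal{I}_n=\bigcap_{k\ge n}A_k$. Thus $(A_k)_k$ witnesses that $\mathcal{I}_n$ is effective, for every $n$.

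For non-computability I would argue by contradiction. Fix, as granted, an effective convex set $A$ that is not computable, and suppose $\mathcal{I}_n$ were computable for all large $n$. By Theorem~\ref{thm:face-characterization} there are, for large $n$, a rational matrix $T$ and a rational face $F=\mathcal{I}_n\cap H$, with $H$ a rational supporting hyperplane, such that $A=TF$. Because rational images of computable sets are computable, it would then suffice to know that $F$ is computable to conclude that $A$ is computable, contradicting the choice of $A$ and forcing $\mathcal{I}_n$ to be non-computable.

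The main obstacle is precisely the implication ``$\mathcal{I}_n$ computable $\Rightarrow$ $F$ computable''. Passing to the face $F$ means intersecting with a supporting hyperplane, and this operation is discontinuous on convex bodies—an extended flat face can collapse to a single vertex under an arbitrarily small Hausdorff perturbation—so computability of the ambient body does not transfer to $F$ for free; indeed $F$ is always effective (its presentation is $A_k\cap H$), but upgrading this to computability requires a rate. To supply such a rate I would use that $H$ is known exactly and write $F$ as the decreasing limit of the rational slabs $\mathcal{I}_n\cap\{f\ge\beta-\varepsilon\}$, approximating each slab from the Hausdorff approximations of $\mathcal{I}_n$ and bounding $d_H\big(\mathcal{I}_n\cap\{f\ge\beta-\varepsilon\},\,F\big)$ effectively in terms of the computed geometry of $\mathcal{I}_n$. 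Verifying that this bound can be certified from the computable data of $\mathcal{I}_n$ alone—equivalently, that the discontinuity of the face map does no harm for the specific rational faces arising here—is where I expect the genuine work to lie; should this route resist a clean rate, the alternative I would pursue is to reroute through Proposition~\ref{pro:SFTs-and-faces} so as to present $A$ as a rational image of a full marginal body $\mathcal{I}_N$ rather than of a proper face, after which the stated closure property applies directly.
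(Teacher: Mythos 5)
Your proposal follows the same two-part route as the paper, and the effectiveness half is complete: the paper dismisses that half with ``it is not hard to show,'' and your presentation $A_k=\widetilde{\pi}_{k,n}(\mathcal{I}_k^{loc})$, justified by Theorem \ref{thm:rational-extreme-points} and Proposition \ref{pro:intersection-of-local-invariant-measures}, is exactly the natural way to fill it in (you even read Proposition \ref{pro:intersection-of-local-invariant-measures} correctly, as $\bigcap_{k\geq n}\widetilde{\pi}_{k,n}(\mathcal{I}_k^{loc})$, where the paper's displayed formula has a typo). The non-computability half also matches the paper, whose entire proof consists of: effective non-computable convex sets exist, Theorem \ref{thm:face-characterization} realizes such a set $A$ as a rational image $TF$ of a rational face $F$ of $\mathcal{I}_n$, and rational images of computable sets are computable. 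The step you single out --- ``$\mathcal{I}_n$ computable $\Rightarrow F$ computable'' --- is precisely the step the paper crosses in silence: it applies the closure property to $F$ without ever arguing that a rational face of a computable convex set is computable. So relative to the paper you have missed nothing; you have in fact located the one joint the paper does not secure.

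That said, your proposed repair cannot work at the generality at which you attempt it, so your write-up remains open at that point. A rational face of a computable convex body need not be computable: take a computable decreasing sequence of nonempty rational polytopes $F_k\subseteq\mathbb{R}^2$ whose intersection $F_\infty=\bigcap_k F_k$ is effective but not computable, and let $C=\overline{\conv}\,\bigcup_k\big(F_k\times\{1/k\}\big)\subseteq\mathbb{R}^3$. Then $C$ is computable --- the rational polytope $P_m=\conv\big(\bigcup_{k\leq m}(F_k\times\{1/k\})\cup(F_m\times\{0\})\big)$ is within Hausdorff distance $1/m$ of $C$, since any $(x,0)$ with $x\in F_m$ lies within $1/m$ of the point $(x,1/m)\in C$ --- yet its face at the rational supporting plane $\{z=0\}$ is exactly $F_\infty\times\{0\}$. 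Hence no computable modulus for the convergence of your slabs $\mathcal{I}_n\cap\{f\geq\beta-\varepsilon\}$ to $F$ can exist in general, and any honest completion must exploit the special structure of $\mathcal{I}_n$ and of the particular faces produced in Theorem \ref{thm:face-characterization} (which are cut out by $x_a=0$, $a\in L$, i.e.\ $F=\pi_n\mathcal{I}(SFT(L))$ as in Proposition \ref{pro:SFTs-and-faces}), or reroute entirely, e.g.\ through a Berger-style reduction in the spirit of the first proof of Theorem \ref{thm:phantom-extreme-points-in-d-2}, or Pivato's argument cited after the theorem. Your fallback of presenting $A$ as a rational image of the full body $\mathcal{I}_N$ is not supplied by Theorem \ref{thm:face-characterization}, whose conclusion only yields faces, so as stated it is a hope rather than an argument. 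In sum: your proposal reproduces the paper's proof and correctly diagnoses its weak point, but it does not close that point, and the specific rate-based fix you sketch is provably unavailable in general.
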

Non-computability of $\mathcal{I}_n$  was also established in \cite{pivato}.

These results should be compared with other propertis of multidimensional
symbolic dynamics which have emerged recently, in which the range
of certain dynamical parameters have been characterized in terms of
the level of computability \cite{HochmanMeyerovitch2010,Simpson2007,Hochman2009}.

Let us summarize the main points above. Proposition \ref{pro:SFTs-and-faces} shows that shifts of finite type
whose set of forbidden patterns is contained in $\Omega_n$
can naturally be identified with faces of $\mathcal{I}_n$. 
Moreover, for shifts of finite type carrying a unique invariant
measure, the corresponding face reduces to a single point
which is an extreme point of $\mathcal{I}_n$.
This fact holds irrespective of the dimension $d$ of the lattice. 
For $d=1$ this accounts for all of the extreme points, which arise as projections
of uniform measures on periodic orbits (Theorem \ref{thm:extreme-points-are-periodic-orbits-in-d-1}).
For $d\geq 2$ there still exist extreme points of $\mathcal{I}_n$
corresponding to projections of uniform measures on periodic orbits, but there are also many extreme points not of this kind. All these ``strange'' extreme points, as well as some of the extreme points arising from the invariant measures of uniquely ergodic SFTs, are properly contained in $\mathcal{I}_n^{loc}$, which is  a rational polytope.
As for the faces of $\mathcal{I}_n$ in dimension $d\geq 2$, we have characterized them up to rational images, but it remains an interesting open
problem to better understand the faces themselves.

\section{Proof of Theorem \ref{thm:rational-extreme-points}}

In this section we prove Theorem \ref{thm:rational-extreme-points}, which holds
in any dimension $d$.

Let $\mathcal{V}_{n}$ denote the space of real-valued functions $x:\Omega_{n}\rightarrow\mathbb{R}$,
$a\mapsto x_{a}$. Each $x\in\mathcal{V}_{n}$ may also be identified
with a signed measure, given by $x(A)=\sum_{a\in A}x_{a}$. With this
identification, we have
\[
\mathcal{P}_{n}=\{x\in\mathcal{V}_{n}\,:\,\sum x_{a}=1\mbox{ and }x_{a}\geq0\mbox{ for all }a\in\Omega_{n}\}.
\]
The extreme points are the vectors $\delta_a$ for $a\in \Omega_n$ and the faces of $\mathcal{P}_{n}$ are precisely the subspace of
$\mathcal{V}_{n}$ determined by setting $x_{a}=0$ for some $A\subseteq\Omega_{n}$.
Now, in order for $x\in\mathcal{P}_{n}$ to belong to $\mathcal{I}_{n}^{loc}$
it must additionally satisfy the condition in Equation \eqref{eq:local-invariance}
for each applicable choice of $E,u$ and $a\in\Sigma^{E}$, and this
condition is nothing other than the equation
\[
\sum_{b\in\Omega_{n}\,:\, b|_{E}=a}x_{b}=\sum_{c\in\Omega_{n}\,:\,(\Theta^{u}c)|_{E}=a}x_{c}
\]
which is an integer-valued linear equation in $\{x_{b}\}_{b\in\Omega_{n}}$.
Let $\mathcal{W}_{n}$ denote the solution set of these equations
for all parameters $E,u,a$. Then $x$ is an extreme point of $\mathcal{I}_{n}^{loc}$
if and only if $\{x\}$ is the intersection of $\mathcal{W}_{n}$
with some face of $\mathcal{P}_{n}$. Such an intersection is
a solution of a family of linear equations with integer coefficients. Such
equations have rational solutions if they have solutions at all, so if there is a unique solution it must be rational.
$\square$

\medskip

Another consequence of this proof is the following:
\begin{cor}
With $\mathcal{W}_{n}$ as in the proof of Theorem \ref{thm:rational-extreme-points},
the faces of $\mathcal{I}_{n}^{loc}$ are precisely the intersections
of $\mathcal{W}_{n}$ with faces of $\mathcal{P}_{n}$.
\end{cor}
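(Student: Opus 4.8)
The plan is to read off the corollary directly from the machinery assembled in the proof of Theorem \ref{thm:rational-extreme-points}, since almost all the work has already been done there. Recall from that proof that $\mathcal{V}_n$ is the ambient vector space of signed measures on $\Omega_n$, that $\mathcal{P}_n$ sits inside it as the standard simplex, that $\mathcal{W}_n$ is the linear subspace cut out by the local-invariance equations \eqref{eq:local-invariance}, and that $\mathcal{I}_n^{loc}=\mathcal{W}_n\cap\mathcal{P}_n$. The key structural facts I would invoke are two: first, the faces of the simplex $\mathcal{P}_n$ are exactly the sets obtained by demanding $x_a=0$ for $a$ ranging over some subset $A\subseteq\Omega_n$ (this is stated explicitly in the proof above); and second, a general principle from convex geometry, that if $P$ is a polytope and $H$ is an affine (here, linear) subspace, then the faces of $P\cap H$ are precisely the sets $F\cap H$ as $F$ ranges over the faces of $P$. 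The corollary is the instance $P=\mathcal{P}_n$, $H=\mathcal{W}_n$.

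First I would establish the general lemma, which is the only substantive step. Let $F$ be a face of $\mathcal{P}_n$, so $F$ is supported by some affine functional $\ell$ with $\ell\le c$ on $\mathcal{P}_n$ and $F=\{x\in\mathcal{P}_n:\ell(x)=c\}$. Restricting $\ell$ to $\mathcal{W}_n$ shows that $\ell\le c$ on $\mathcal{I}_n^{loc}=\mathcal{P}_n\cap\mathcal{W}_n$, with equality exactly on $F\cap\mathcal{W}_n$; hence $F\cap\mathcal{W}_n$ is a face of $\mathcal{I}_n^{loc}$. Conversely, given a face $G$ of $\mathcal{I}_n^{loc}$ supported by a functional $m$ on $\mathcal{W}_n$, I would extend $m$ to an affine functional $\widehat{m}$ on all of $\mathcal{V}_n$ (any linear extension will do, since $\mathcal{W}_n$ is a linear subspace). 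The extended functional need not support a face of $\mathcal{P}_n$ on its own, so the cleaner route is to use the explicit description of faces of the simplex: a face $G$ of $\mathcal{I}_n^{loc}$ is determined by the set $A=\{a\in\Omega_n : x_a=0 \text{ for all } x\in G\}$ of coordinates that vanish identically on $G$, and one checks that $G=(\mathcal{P}_n\cap\{x_a=0,\,a\in A\})\cap\mathcal{W}_n$, exhibiting $G$ as $F\cap\mathcal{W}_n$ for the corresponding face $F$ of $\mathcal{P}_n$.

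The main obstacle, and the one point requiring genuine care, is the converse direction: one must verify that every face $G$ of $\mathcal{I}_n^{loc}$ really does arise as the trace of a face of the ambient simplex, rather than from a supporting hyperplane that is transverse to $\mathcal{W}_n$ and touches $\mathcal{I}_n^{loc}$ along a set that is not the restriction of any face of $\mathcal{P}_n$. The resolution is exactly the coordinate-vanishing description above: because $\mathcal{I}_n^{loc}\subseteq\mathcal{P}_n$ lives in the nonnegative orthant intersected with $\sum x_a=1$, any proper face of $\mathcal{I}_n^{loc}$ lies in the relative boundary, where necessarily some nonnegativity constraints $x_a\ge 0$ become active; the face is the intersection of $\mathcal{I}_n^{loc}$ with the supporting hyperplanes of those active constraints, and these are precisely the defining hyperplanes of a face $F$ of $\mathcal{P}_n$. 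Thus $G=F\cap\mathcal{W}_n$, which completes the identification and proves the corollary.
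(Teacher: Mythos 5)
Your proposal is correct and takes essentially the same route as the paper, which states the corollary without separate argument as a byproduct of the proof of Theorem \ref{thm:rational-extreme-points}: the implicit fact there, that faces of $\mathcal{I}_n^{loc}=\mathcal{P}_n\cap\mathcal{W}_n$ arise exactly by making some of the nonnegativity constraints $x_a\geq 0$ active, i.e.\ by intersecting $\mathcal{W}_n$ with the coordinate-vanishing faces of the simplex, is precisely the active-constraint argument you spell out in your converse direction. You merely supply the standard polyhedral details the paper leaves implicit (and correctly abandon the functional-extension attempt in favor of them), so there is nothing to add.
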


\section{Proofs in dimension one}

We first prove a more general fact: For $d=1$, every $\mu\in\mathcal{I}_{n}^{loc}$
is the image under the restriction map $\pi_{n}:\mathcal{I}\rightarrow\mathcal{I}_{n}$
of a stationary $(n-1)$-step Markov measure $\widetilde{\mu}\in\mathcal{I}$.

To see this we construct the chain explicitly. For $\mu\in\mathcal{I}_{n}^{loc}$
we first construct a Markov chain with state space \[
V=\{a\in\Sigma^{[-n;n-1]}\,:\,\mu([a]_{n})>0\}\]
and given $a\in V$ let
\[
E_{a}=\{b\,:\, aa'=b'b\mbox{ for some }a',b'\in\Sigma\mbox{, and }\mu([aa']_{n})>0\}
\]
(here $aa'\in\Sigma^{[-n;n]}$ is the concatenation of $a$ and $a'$.
Note that $aa'\in\Omega_{n}$). Define the stochastic matrix $(p_{a,b})_{a,b\in V}$
by
\[
p_{a,b}=\frac{\mu([aa']_{n})}{\sum_{a''\in\Sigma}\mu([aa'']_{n})}
\]
whenever $b\in E_{a}$ and $aa'=b'b$ for $a',b'\in\Sigma$ (note that $a',b'$ are determined uniquely),
and set $p_{a,b}=0$ otherwise.

It is now straightforward to verify that the the probability vector
$p(a)=\mu([a]_{n})$, $a\in V$, is stationary for the stochastic
matrix $(p_{a,b})_{a,b\in V}$. By construction the shift-invariant
measure $\widetilde{\mu}$ on $V^{\mathbb{Z}}$, corresponding to this Markov chain, may be identified
with an invariant measure $\widetilde{\mu}$ on $\Sigma^{\mathbb{Z}}$
via the factor map $\tau:V^{\mathbb{Z}}\rightarrow\Sigma^{\mathbb{Z}}$
given by
\[
\tau(x)_{n}=(x_{n})_{0}.
\]
One verifies that the latter measure has marginal equal to
$\mu$. $\square$

\medskip

Let us make a few comments on this construction. First, the measure
$\widetilde{\mu}$ by construction gives positive measure to $[a]\subseteq \Sigma^\mathbb{Z}$ for $a\in V$,
and hence the matrix $(p_{a,b})$ is irreducible. Second, the transition
graph $G=(V,E)$, defined with $V$ as above and $E=\{(a,b)\;:\; a\in V\,,\, b\in E_{a}\}$,
is determined completely by the support of $\mu$. If $\nu\in\mathcal{I}_{n}^{loc}$
is a measure with $\supp\nu\subseteq\supp\mu$, then the graph constructed
as above for $\nu$ will be a sub-graph of the one constructed for
$\mu$. Consequently, $\widetilde{\nu}$ is supported on $\supp\widetilde{\mu}$,
although its support may actually be smaller. In particular this holds
whenever $\nu$ is the marginal on $\Omega_{n}$ of an invariant measure
on $\supp\widetilde{\mu}$. 
\begin{proof}
[Proof of Theorem \ref{thm:extreme-points-are-periodic-orbits-in-d-1}]
Let $\mu\in\mathcal{I}_{m}^{loc}$ be an extreme point. By the argument
above it is the image of an invariant Markov measure $\widetilde{\mu}$.
Also, since $\mu$ is an extreme point and $\mathcal{I}_{n}^{loc}=\mathcal{W}_{n}\cap\mathcal{P}_{n}$,
there is a face $\mathcal{F}$ of $\mathcal{P}_{n}$ such that $\mathcal{W}_{n}\cap\mathcal{F}=\{\mu\}$.
Here $\mathcal{W}_{n}\subseteq\mathcal{V}_{n}$ is the subspace defined
by the local invariance conditions as in the proof of Theorem \ref{thm:rational-extreme-points}.
We claim that $\mu$ must be the uniform measure on a periodic orbit,
or, equivalently, that the transition graph $G=(V,E)$ of $\widetilde{\mu}$
constructed as above consists of a single cycle. Indeed, otherwise
there would be a proper subgraph $G'\subseteq G$ which is itself a directed cycle. Taking any positive
stochastic matrix for $G'$, the corresponding Markov measure projects
down to a locally invariant measure $\nu\in\mathcal{I}_{n}^{loc}$.
Since $\supp\nu\subseteq\supp\mu$ we have \ $\nu\in\mathcal{F}$ (recall that faces in $\mathcal{P}_n$ are defined by setting some set of coordinates to zero),
so $\nu\in\mathcal{W}_{n}\cap\mathcal{F}=\{\mu\}$. On the other hand,
since $G'$ is a proper subgraph of $G$ there is some $A\subseteq\Omega_{n}$
with $\nu(A)=0$ and $\mu(A)>0$, a contradiction.
\end{proof}

\section{Proofs of higher dimensional results}\label{highdim}

We now turn to the higher dimensional case, $d\geq2$. Let us first
show that $\mathcal{I}_{n}^{loc}\neq\mathcal{I}_{n}$. We shall give
two different proofs of this fact. We begin with one based on Berger's
theorem \cite{Berger1966}, which asserts that there is no algorithm
which computes, for $L\subseteq\Omega_{n}$, whether $SFT(L)=\emptyset$.
\begin{proof}
[First proof of Theorem \ref{thm:phantom-extreme-points-in-d-2}] Suppose
that for every $n$ all of the extreme points of $\mathcal{I}_{n}^{loc}$
were projections of invariant measures. We show that we could
then decide, given $L\subseteq\Omega_{n}$, whether $SFT(L)=\emptyset$,
contradicting Berger's theorem.

Suppose $L\subseteq\Omega_{n}$ is given and define the convex hull
\[
\mathcal{F}=\conv\{\delta_{a}\,:\, a\in\Omega_{n}\setminus L\}.
\]
This is a face of $\mathcal{P}_{n}$, and $\mathcal{F}\cap\mathcal{W}_{n}$
is a face of $\mathcal{I}_{n}^{loc}$, where $\mathcal{W}_{n}$ is
the subspace defined by the local invariance conditions, as in the
proof of Theorem \ref{thm:rational-extreme-points}. Since $\mathcal{W}_{n}$
is given explicitly by integer linear equations, and $L$ is a given
finite set, we can decide if $\mathcal{F}\cap\mathcal{W}_{n}=\emptyset$.
In order to complete the proof, it is enough to show that $\mathcal{F}\cap\mathcal{W}_{n}=\emptyset$
if and only if $SFT(L)=\emptyset$.

In one direction, if $SFT(L)\neq\emptyset$ then there is an invariant
measure $\mu$ on $SFT(L)$, and by definition $\mu([a])=0$ for $a\in L$,
so also
\[
(\pi_{n}\mu)([a]_{n})=0.
\]
Thus $\pi_{n}\mu\in\mathcal{F}\cap\mathcal{W}_{n}$, so $\mathcal{F}\cap\mathcal{W}_{n}\neq\emptyset$.

Conversely, If $\nu\in\mathcal{F}\cap\mathcal{W}_{n}$ then we can
write $\nu=\sum p_{i}\nu_{i}$ where $p=(p_{i})$ is a probability
vector and $\nu_{i}\in\ext\mathcal{I}_{n}^{loc}$. In fact, clearly
$\nu_{i}\in\mathcal{F}\cap\mathcal{W}_{n}$ (this is true for geometric
reasons, but, more concretely, because $\nu([a]_{n})=0$ for $a\in L$
and therefore, since $\nu=\sum p_{i}\nu_{i}$, also $\nu_{i}([a]_{n})=0$
for all $i$ and $a\in L$. This implies by definition that $\nu_{i}\in\mathcal{F}$).
By assumption, $\nu_{i}=\pi_{n}\mu_{i}$ for some invariant measure $\mu_i$
on $\Omega$. But $\mu([a])=0$ for $a\in L$, so $\mu$ is supported
on $SFT(L)$, and therefore $SFT(L)\neq\emptyset$.
\end{proof}
Evidently, one cannot decide whether a locally invariant measure on
$\Omega_{n}$ can be extended to an invariant measure on $\Omega$.
This is the analogue of the undecidability of the extension problem for SFTs, i.e.\ that
one cannot decide, given a pattern $a\in\Sigma^{E}$, whether there
is a point $x\in SFT(L)$ such that $x|_{E}=a$. 

We now turn to $\mathcal{I}_{n}$. Since in dimension $1$ the sets
$\mathcal{I}_{n}$ and $\mathcal{I}_{n}^{loc}$ coincide, Theorem \ref{thm:extreme-points-are-periodic-orbits-in-d-1}
can be re-stated as follows: if $X\subseteq\Omega$ is a periodic
orbit then the unique invariant measure on it projects to an extreme
point of $\mathcal{I}_{n}$ for all large enough $n$. Here is the proof in the multi-dimensional case:

\begin{proof}
[Proof of Proposition \ref{pro:SFTs-and-faces}] Suppose $L\subseteq\Omega_{k}$
and $X=SFT(L)\neq\emptyset$. For $n\geq k$ let $\mathcal{F}\subseteq\mathcal{P}_{n}$
be the face defined by the condition $\mu([a]_{n})=0$ for $a\in L$.
Then, as in the proof of Theorem \ref{thm:phantom-extreme-points-in-d-2},
any $\mu\in\mathcal{F}\cap\mathcal{I}_{n}$ is the projection under
$\pi_{n}$ of a measure supported on $X$. Since $\mathcal{F}\cap\mathcal{I}_{n}$
is a face of $\mathcal{I}_{n}$, this proves the first part of the
proposition. The second follows from the fact that when $X$ is uniquely
ergodic there is only one measure in $\mathcal{I}$ supported on $X$
and therefore $\mathcal{F}\cap\mathcal{I}_{n}$ is a singleton (consisting
of the projection of this measure to $\mathcal{I}_{n}$), and hence
an extreme point. 
\end{proof}

Alternatively, the following proof gives some additional geometric information:

\begin{proof}
[Second proof of Theorem \ref{thm:phantom-extreme-points-in-d-2}] 
Assuming Theorem \ref{pro:SFT-with-irrational-masses}, there
is an SFT $X=SFT(L)$ which projects to an extreme point of $\mathcal{I}_{n}$
for all large enough $n$, but for which there are disjoint clopen sets with masses whose ratio is irrational, and therefore by Theorem \ref{thm:rational-extreme-points}
cannot be an extreme point of $\mathcal{I}_{n}^{loc}$.
\end{proof}

We offer two proofs of Theorem \ref{pro:SFT-with-irrational-masses}.

\begin{proof}
[First proof] 
One can apply \cite[Theorem 4.1]{HochmanMeyerovitch2010}. This uses the fact that there
are computable irrational numbers (e.g. there is an algorithm which
given $n$ outputs the $n$-th binary digit of $\sqrt{2}$), and requires
a few small modifications to the proof in  \cite[Theorem 4.1]{HochmanMeyerovitch2010} in order to verify that the resulting
SFT is uniquely ergodic, but these are straightforward given the machinery
in \cite{Mozes1989}.
\end{proof}
\begin{proof}
[Second proof]
The second proof relies on two facts. First, there are translation-minimal
spaces of geometric tilings (i.e.\ tilings of the Euclidean plane by
translates and rotations of finitely many polygons) in which the ratios
of the areas of prototiles is irrational. An example of such a tiling
is the Penrose kites-and-darts tiling system \cite{Robinson1996}.
Second, there is an orbit equivalence between this system and the
suspension of an appropriate uniquely ergodic $\mathbb{Z}^{2}$-SFT
\cite{SadunWilliams2003}. This gives appropriately weighted cylinder
sets. More details are provided in Section \ref{tilings}.
\end{proof}

\section{Rational images of $\mathcal{I}_{n}$}

In this section we prove Theorems \ref{thm:non-ue-SFT-extreme-points}
and \ref{thm:face-characterization}. We rely on some results about
effective subdynamics of SFTs which appeared in \cite{Hochman2009} and were
 generalized in \cite{AubrunSablik2010} and \cite{DurandRomashchenkoShen2009} (the earlier in \cite{HochmanMeyerovitch2010} can easily be modified as
 well to give the same results). We describe these results next.

 We begin with some definitions about one-dimensional patterns, which
we call words. Below we shall use superscrips to indicate the dimensions of patterns and probability spaces on patterns. Thus $\Omega_n^1$ is the space of 1-dimensional patterns over $\Sigma$, with shape $\Lambda_n^1=[-n,n]\cap\mathbb{Z}$, and $\Omega_n^2$ is the space of two-dimensional patterns over $\Lambda_n^2=[-n,n]^2\cap\mathbb{Z}$. We similarly write 
$\mathcal{P}_n^1$ and $\mathcal{P}_n^2$ for the space of probability measures on $\Omega_n^1,\Omega_n^2$, respectively.

The set of words over an alphabet $\Sigma_{0}$ is
$\Sigma_{0}^{*}=\bigcup_{n=0}^{\infty}\Sigma_0^n$ . Consider an alphabet $\Sigma_{0}$ and a family $L\subseteq\Sigma^{*}$.
The subshift $X\subseteq\Sigma^{\mathbb{Z}}$ consisting of all sequences
which omit the patterns from $L$ is a subshift which we shall say
is of infinite type and denote $SIT(L)$. If we write $L_{n}=L\cap\bigcup_{i=1}^{n}\Omega_{n}$
then $SIT(L)$ is equal to
\[
SIT(L)=\bigcap_{n=1}^{\infty}SFT(L_{n}).
\]
Note that in fact every subshift arises in this way. 

A subshift $X$ is \emph{effective }if there is a recursively enumerable
set $L$ such that $X=SIT(L)$; that is, $L$ should have the property
that there exists an enumeration $L=\{a_{1},a_{2},\ldots\}$ and an
algorithm which on input $n$ outputs $a_{n}$.

Effective subshifts form a much larger class than SFTs, but there
is a close relation. Given $X\subseteq\Sigma^{\mathbb{Z}}$ let $\widehat{X}\subseteq\Sigma^{\mathbb{Z}^{2}}$
denote the subshift
\[
\widehat{X}=\{\widehat{x}\in\Sigma^{\mathbb{Z}^{2}}\,:\,\widehat{x}|_{\mathbb{Z}\times\{0\}}\in X\mbox{ and }\widehat{x}_{i,j}=\widehat{x}_{i,j+1}\mbox{ for all }i,j\in\mathbb{N}\}
\]
that is, $\widehat{X}$ is obtained from $X$ be extending each configuration
$x\in X$ vertically to a 2-dimensional configuration in which one
sees $x_{i}$ in the $i$-th column.
\begin{thm}
[\cite{Hochman2009,DurandRomashchenkoShen2009,AubrunSablik2010}]\label{thm:subdynamics}
If $X\subseteq\Sigma^{\mathbb{Z}}$ is an effective subshift
then there is a 2-dimensional SFT $Y$ which factors onto $\widehat{X}$.
Furthermore $Y$ can be chosen so that it has entropy $0$ and has
no periodic points.
\end{thm}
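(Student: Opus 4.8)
The plan is to realize $\widehat{X}$ as the image of a 2-dimensional SFT that carries out, inside itself, the computation recognizing the forbidden words of $X$. Since $X$ is effective, write $X = SIT(L)$ with $L\subseteq\Sigma^*$ recursively enumerable, and fix a Turing machine $M$ that halts on a word $w$ precisely when $w\in L$. The SFT $Y$ will be a product of three layers: a \emph{content layer}, in which every cell carries a symbol of $\Sigma$ subject to the local rule that the symbol is constant along each column (so a configuration's content is a single sequence over $\Sigma$ repeated in every row, matching the structure of $\widehat{X}$); a \emph{skeleton layer}, an auxiliary aperiodic SFT producing a hierarchy of nested computation zones; and a \emph{computation layer}, whose legal configurations encode space--time diagrams of $M$ running inside those zones on the content they span.

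First I would construct the skeleton. Using a hierarchical self-similar tiling --- either a Robinson-type construction or, more transparently, the fixed-point (self-simulating) tile sets of Durand--Romashchenko--Shen --- one obtains an aperiodic SFT in which, for every scale $k$, there appear disjoint square computation zones of side $\approx 2^k$, arranged so that every column of the plane is spanned by zones of arbitrarily large size. The local matching rules reserve, inside each zone, room for a fresh run of $M$; the input to that run is read off the content symbols in the columns the zone covers, and the tiling rules synchronize this reading with the zone's geometry. If the run of $M$ halts (i.e.\ detects a forbidden word), the rules force an ``error'' tile that is locally illegal, so no legal configuration contains a detected forbidden word.

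Next I would define the factor map $\phi:Y\to\widehat{X}$ by forgetting the skeleton and computation layers and retaining only the content layer; this is a sliding-block (continuous, shift-commuting) map. Two things need checking. For surjectivity, given $x\in X$ no forbidden word occurs, so inside every computation zone $M$ never halts; one can therefore fill in the skeleton (a nonempty SFT) and consistent non-halting computations, producing a point of $Y$ with content $x$ whose image is $\widehat{x}$. For the inclusion $\phi(Y)\subseteq\widehat{X}$, suppose some $y\in Y$ had a forbidden word $w\in L$ in its content row. Since $M$ halts on $w$ after finitely many steps, and since computation zones of every sufficiently large size cover the columns where $w$ sits, some zone is large enough to run $M$ on $w$ to completion; that zone would then carry the forbidden error tile, contradicting $y\in Y$. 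Hence the content of $y$ omits all words of $L$, so it lies in $X$, and by column-constancy $\phi(y)\in\widehat{X}$.

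The hard part is the design of the skeleton: making it a genuine SFT (only finitely many local rules) that nonetheless generates computation zones of unbounded size, positioned so that every finite content-window is eventually inspected by a zone large enough to finish $M$'s computation, while synchronizing the content read by a zone with the columns it spans. This is precisely the technical core of the cited works. The remaining clauses are then arranged simultaneously: the hierarchical skeleton is chosen aperiodic, so $Y$ has no periodic points (a periodic point of $Y$ would project to one of the skeleton factor); and it is ``rigid'', i.e.\ its admissible configurations carry no entropy. Since the content is constant along columns it contributes only a one-dimensional amount of information, and the computation layer is functionally determined by skeleton and content, so the number of $Y$-patterns on an $n\times n$ box grows like $2^{o(n^2)}$ and the topological entropy of $Y$ is $0$.
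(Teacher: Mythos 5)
Your proposal is correct and follows essentially the same route as the sources the paper cites for this theorem (the paper gives no proof of its own): a column-constant content layer, a hierarchical aperiodic skeleton of computation zones built by Robinson-type or fixed-point self-simulating tilings, and a computation layer running a machine enumerating the forbidden words, with error tiles excluded --- precisely the Aubrun--Sablik and Durand--Romashchenko--Shen architecture, including the standard verifications of surjectivity, zero entropy, and aperiodicity. The one genuinely hard ingredient, synchronizing zones of unbounded size with the content columns so that every finite window is eventually checked within a zone's time--space budget, you correctly identify and defer to the cited machinery, which is exactly how the paper itself treats the statement.
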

Note that the additional properties of $Y$ ensure that $Y$ can be realized as a subshift over any alphabet
$\Sigma$, in particular we may assume that $Y\subseteq\Omega^{2}$
for the fixed alphabet $\Sigma$ with which we are working. 

Given a subshift $Z$ let $\mathcal{I}(Z)$ denote the convex set
of invariant measures on $Z$. We denote by $\Delta$ an unspecified finite alphabet, possibly different from $\Sigma$.
\begin{prop}
Let $X\subseteq\Delta^{\mathbb{Z}}$ be an effective subshift. Then
$\pi_{n}(\mathcal{}(X))\subseteq\mathcal{P}_{n}^{1}$ is a rational
image of a rational face of $\mathcal{P}_{n}^{2}$ for all large enough
$n$.\end{prop}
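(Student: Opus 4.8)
The plan is to connect an effective one-dimensional subshift $X\subseteq\Delta^{\mathbb Z}$ to a two-dimensional SFT using Theorem \ref{thm:subdynamics}, and then push invariant measures through the resulting maps while tracking how the marginals transform linearly. First I would apply Theorem \ref{thm:subdynamics} to $X$ to obtain a $2$-dimensional SFT $Y\subseteq\Omega^2$ (over our fixed alphabet $\Sigma$, using the zero-entropy, aperiodic realization noted after the theorem) together with a factor map $\phi:Y\to\widehat X$. Every invariant measure on $\widehat X$ lifts to an invariant measure on $Y$, and conversely $\phi$ pushes invariant measures on $Y$ forward onto invariant measures on $\widehat X$; moreover the vertical-extension construction of $\widehat X$ gives a natural affine bijection between $\mathcal I(\widehat X)$ and $\mathcal I(X)$, since an invariant measure on $\widehat X$ is completely determined by its restriction to the row $\mathbb Z\times\{0\}$, which is exactly an invariant measure on $X$. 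So I would set up the chain of affine maps $\mathcal I(Y)\twoheadrightarrow\mathcal I(\widehat X)\cong\mathcal I(X)$.

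Next I would express $\pi_n(\mathcal I(X))$ as a rational linear image of a suitable face of $\mathcal P_n^2$. By Proposition \ref{pro:SFTs-and-faces}, $\pi_n^{(2)}(\mathcal I(Y))$ is a rational face of $\mathcal I_n^2$, hence (for $n$ large enough that the defining patterns of $Y$ lie in $\Omega_n^2$) a rational face $\mathcal G$ of $\mathcal P_n^2$ intersected with the local-invariance subspace; I would work with the face $\mathcal G$ of $\mathcal P_n^2$ itself. The key point is that all three operations — taking the marginal $\pi_n^{(2)}$ on $Y$, applying the factor $\phi$, and restricting to the base row — act on finite-dimensional marginals by \emph{rational} linear maps. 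The factor $\phi$ is a sliding-block code with some radius, say $r$; its action on a cylinder probability in $\Omega_n^2$ is a finite sum with integer (in fact $0/1$) coefficients recording which $\Omega_{n}^2$-patterns of $Y$ map to a given pattern of $\widehat X$, so for $n$ sufficiently large this descends to a rational matrix $T$ on the coordinate spaces $\mathcal V_n^2\to\mathcal V_n^1$. Composing with the affine identification of $\widehat X$-marginals with $X$-marginals (again a $0/1$ coordinate projection, since the relevant values live in a single row) yields a single rational linear map $T$ with $T\big(\pi_n^{(2)}(\mathcal I(Y))\big)=\pi_n^{(1)}(\mathcal I(X))$.

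The step I expect to be the main obstacle is bookkeeping the dimensions and radii so that a single $n$ simultaneously (i) exceeds the radius $r$ of the sliding-block code defining $\phi$, so that the image marginal on $\Lambda_n^1$ is genuinely determined by the source marginal on $\Lambda_n^2$ and $T$ is well-defined; and (ii) exceeds the window size of the forbidden patterns defining $Y$, so that Proposition \ref{pro:SFTs-and-faces} applies and $\pi_n^{(2)}(\mathcal I(Y))$ is a rational face of $\mathcal I_n^2$. One must check that surjectivity survives the passage to marginals: every invariant measure on $X$ arises as the row-marginal of an invariant measure on $\widehat X$, which lifts to an invariant measure on $Y$ (factor maps of subshifts push invariant measures forward onto \emph{all} invariant measures of the image), and conversely $T$ maps $\pi_n^{(2)}(\mathcal I(Y))$ into $\pi_n^{(1)}(\mathcal I(X))$; combining these gives the claimed equality $T\big(\pi_n^{(2)}(\mathcal I(Y))\big)=\pi_n^{(1)}(\mathcal I(X))$. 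Once these alignments are fixed, rationality of $T$ is automatic from its $0/1$ entries, and the proposition follows for all large enough $n$.
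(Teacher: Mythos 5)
Your proposal is correct and is essentially the paper's own proof: apply Theorem \ref{thm:subdynamics} to get the SFT $Y$ factoring onto $\widehat{X}$, note that the factor map is a sliding-block code of some radius $n_0$ whose action on marginals is a rational ($0/1$-coefficient) linear map carrying the face $\pi_{n+n_0}\mathcal{I}(Y)$ onto the marginals of $\widehat{X}$, and compose with the rational coordinate projection onto the row $\{(i,0)\,:\,-n\leq i\leq n\}$ to obtain $\pi_{n}\mathcal{I}(X)$. Your only additions are the explicit check that invariant measures lift through the factor map (which the paper leaves implicit but which is needed for surjectivity), and a stray aside about working with the face $\mathcal{G}$ of $\mathcal{P}_n^2$ alone, which you should drop since your operative identity $T\bigl(\pi_n^{(2)}(\mathcal{I}(Y))\bigr)=\pi_n^{(1)}(\mathcal{I}(X))$ correctly uses $\pi_n^{(2)}(\mathcal{I}(Y))$ exactly as the paper does.
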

\begin{proof}
Let $X$ be given and let $Y\subseteq\Omega^{2}$ and $\varphi:Y\rightarrow\widehat{X}$
be as in the theorem. By Theorem \ref{thm:subdynamics}, $\pi_{n}\mathcal{I}(Y)\subseteq\mathcal{P}_{n}^{2}$
is a face of $\mathcal{P}_{n}^{2}$ for large enough $n$. The factor
map $\varphi:Y\rightarrow\widehat{X}$ is given by a partition $\mathcal{A}=\{A_{1},\ldots,A_{s}\}$
of $\Omega$ into clopen sets and the rule that $\varphi(x)_{j}=i$
if and only if $\Theta^{u}x\in A_{i}$. Since each $A_{i}$ is the
union of finitely many cylinder sets, there is an $n_{0}$ such that
all $A_{i}$ depend on coordinates of modulus $<n_{0}$. It follows
that $\mathcal{P}_{n}^{2}(\widehat{X})$ is a rational image of the
face $\pi_{n+n_{0}}\mathcal{I}(Y)\subseteq\mathcal{P}_{n+n_{0}}^{2}$,
for all large enough $n$. But $\mathcal{P}_{n}^{1}(X)$ is a rational
image of $\mathcal{P}_{n}^{2}(\widehat{X})$, since it is obtained
by projecting to the subspace determined by the coordinates $\{(i,0)\,:\,-n\leq i\leq n\}$.
This proves the claim.
\end{proof}
Using this, Theorem \ref{thm:face-characterization} follows from the next proposition.
\begin{prop}
If $C\subseteq\mathbb{R}^{k}$ is an effective convex set then there
is an alphabet $\Sigma$, an effective subshift $X\subseteq\Sigma^{\mathbb{Z}}$,
and an $n$ such that $C$ is a rational image of $\pi_{n}\mathcal{I}(X)\subseteq\mathcal{P}_{n}^{1}$
for all large enough $n$.\end{prop}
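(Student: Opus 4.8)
The plan is to realize $C$ directly as the set of achievable content-symbol frequencies of an effective subshift, read off by a rational linear map. First I would record a normal form for an effective convex set. Since $C\subseteq A_1$ it is bounded, and replacing $A_k$ by $\bigcap_{j\le k}A_j$ we may assume the approximating polytopes decrease to $C$; passing to half-space representations gives $C=\{x:\langle v_i,x\rangle\le b_i,\ i\in I\}$ for a recursively enumerable index set $I$ and rational data $(v_i,b_i)$. Applying a rational affine change of coordinates $\phi$, I may assume $\widetilde C:=\phi(C)$ lies in the interior of the standard $k$-simplex $\Delta\subseteq\mathbb{R}^{k+1}$, so that $\widetilde C=\{y\in\Delta:\langle w_i,y\rangle\le\beta_i,\ i\in I\}$ with $(w_i,\beta_i)$ rational and (after scaling) $\|w_i\|\le1$. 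I will build $X$ over an alphabet $\Sigma=\Sigma_c\times\Sigma_s$ with a content track $\Sigma_c=\{\sigma_0,\dots,\sigma_k\}$ and a scaffold track $\Sigma_s$, so that for every $\mu\in\mathcal{I}(X)$ the vector $(\mu([\sigma_j]))_{j=0}^k$ — a rational linear functional of $\pi_n\mu$ for every $n\ge0$ — ranges exactly over $\widetilde C$. Composing with $\phi^{-1}$ (which is linear on the hyperplane $\sum y_j=1$) then exhibits $C$ as a rational image of $\pi_n\mathcal{I}(X)$ for every $n$, so the clause ``for all large enough $n$'' holds trivially.

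The scaffold track will carry a rigid self-similar hierarchy that, for every $m$, partitions $\mathbb{Z}$ into level-$m$ blocks of computable length $N_m\uparrow\infty$, together with a Turing computation running inside each block. Inside a level-$m$ block the computation reads the content track across the block, forms the empirical frequency vector $\widehat q_m\in\mathbb{Q}^{k+1}$ of the symbols $\sigma_0,\dots,\sigma_k$, enumerates the first $m$ constraints, and verifies $\langle w_i,\widehat q_m\rangle\le\beta_i+\epsilon_m$ for each $i\in I_m$, where $\epsilon_m\downarrow0$ is a fixed computable sequence. I would then set $X=SIT(L)$, where $L$ is the set of finite patterns that either violate the (computable) scaffold format or witness a failed verification inside some block. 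Because $I$ is recursively enumerable and the block computation is effective, $L$ is recursively enumerable, so $X$ is an effective subshift. Note that every position still carries a content symbol, so the scaffold occupies a parallel track and incurs no loss of content density; in particular the reading map above is unaffected and needs no irrational normalizing factor.

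For the inclusion into $\widetilde C$: if $\mu\in\mathcal{I}(X)$ is ergodic then at $\mu$-a.e.\ point the empirical frequency over the level-$m$ block through the origin converges, as $m\to\infty$, to $c:=(\mu([\sigma_j]))_j$ (a window of length $N_m\to\infty$, by the ergodic theorem); since that block passes all verifications and $\epsilon_m\to0$, letting $m\to\infty$ yields $\langle w_i,c\rangle\le\beta_i$ for every $i\in I$, i.e.\ $c\in\widetilde C$. Every invariant measure is an average of ergodic ones and $\widetilde C$ is closed and convex, so all of $\pi_0\mathcal{I}(X)$ maps into $\widetilde C$. For surjectivity, given $c\in\widetilde C$ I would fill every level-$m$ block with content realizing an empirical frequency within $\epsilon_m$ of $c$ (possible once $N_m$ exceeds $1/\epsilon_m$, since block compositions form an $O(1/N_m)$-net of $\Delta$); as $c$ satisfies every constraint and $\|w_i\|\le1$, all verifications pass, so the resulting configuration lies in $X$, and rigidity of the scaffold forces every configuration in its orbit closure to have content frequency $c$, whence any invariant measure supported there attains $c$. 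Thus the image is exactly $\widetilde C$.

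The technical heart is the scaffold: I need a recursively presented one-dimensional subshift carrying a hierarchy that is \emph{recognizable}, so that in \emph{every} configuration of $X$ the level-$m$ block structure is visible at all $m$ and the verification at scale $m$ genuinely constrains the asymptotic frequency of a.e.\ point of every ergodic measure, while still leaving each block enough room to run the frequency computation. This is exactly the sort of hierarchical computational device underlying Theorem \ref{thm:subdynamics} and the references cited there; the saving grace is that here one needs only an \emph{effective} one-dimensional subshift rather than an SFT, so the freedom to forbid an arbitrary recursively enumerable set of patterns makes presenting the scaffold comparatively soft, and the real work lies in the recognizability and the forcing argument above. I would also stress why the scaffold cannot be dispensed with: if one instead forbade every \emph{window} whose empirical frequency violated a constraint, the short windows — whose frequencies are extreme — would delete symbols outright, and realizing a target $c$ would require a fixed tolerance to absorb window-discrepancies whose decay depends on the Diophantine properties of $c$. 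Restricting the checks to designated block scales, where the composition is controlled directly, removes this dependence and is what makes both directions of the argument go through.
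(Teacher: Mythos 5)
Your construction takes a genuinely different route from the paper's. The paper also first normalizes $C$ into the simplex by a rational affine map, but then proceeds purely combinatorially: it builds nested concatenation subshifts $X_{E_n}$, where $E_n\subseteq\Sigma_0^{N_n}$ is a computable set of words whose empirical frequency vectors are \emph{exactly} the extreme points of the $n$-th approximating polytope $C_n$. Because these extreme points are rational and each extreme point of $C_{n+1}$ is a rational convex combination of those of $C_n$, one can take $N_{n+1}$ to be a suitable multiple of $N_n$ and realize every extreme point of $C_{n+1}$ exactly as a concatenation of $E_n$-words; then $\pi_0\mathcal{I}(X_{E_n})=C_n$ on the nose (ergodic theorem for one inclusion, periodic points through the extreme words for the other), and $X=\bigcap_n X_{E_n}$ is effective since the $E_n$ are computable. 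There are no markers, no tolerances $\epsilon_m$, and no computation embedded in the subshift: the hierarchy is implicit in $E_{n+1}\subseteq E_n^*$, and exactness at every scale is what lets the paper dispense entirely with the recognizable scaffold that is the declared technical heart of your argument. Your simulation-style scheme is more generic (it would tolerate replacing linear constraints by other computable tests), at the cost of importing machinery that the one-dimensional effective setting does not require.

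Two concrete soft spots remain in your write-up. First, the ``Turing computation running inside each block'' is a two-dimensional artifact: in one dimension there is no time direction, so embedding a computation means writing its whole space-time history inside the block. This is unnecessary --- since you only need $X$ effective, the verification should be delegated to the algorithm enumerating $L$ (enumerate $m$, the scaffold-consistent blocks of length $N_m$, and the constraints $i\le m$, and forbid any block failing a test); as stated, the in-block computation is under-specified and complicates recognizability for no gain. Second, and more substantively, your surjectivity step treats the fillings at different levels as independent: ``fill every level-$m$ block with content realizing an empirical frequency within $\epsilon_m$ of $c$'' cannot be done level by level, because a level-$(m{+}1)$ block's content is determined by its level-$m$ sub-blocks, and sub-blocks each within $\epsilon_m$ of $c$ only force the parent to be within $\epsilon_m$ of $c$, not within $\epsilon_{m+1}$. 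You need a hierarchical balanced-rounding argument: assign each level-$m$ block an integer target count vector within $O(k)$ of $N_m c$ coordinatewise, split each parent's target among its children with bounded rounding error, and take $N_m\gtrsim k/\epsilon_m$ so that the resulting frequency error $O(k/N_m)$ beats $\epsilon_m$ --- this is precisely where your Diophantine remark is vindicated, since counts rather than frequencies are what can be controlled uniformly in $c$. With that repair, and with the scaffold instantiated by a standard Toeplitz-type marker hierarchy (soft in the effective one-dimensional setting, as you note), your argument goes through; your inclusion direction is also cleaner if, instead of a.e.\ convergence along the moving blocks through the origin, you integrate: a window of length $KN_m$ contains $K-O(1)$ full level-$m$ blocks, each passing the linear tests within $\epsilon_m$, so $\langle w_i,q(\mu)\rangle\le\beta_i+\epsilon_m+O(1/K)$ for every invariant $\mu$, and letting $K,m\to\infty$ gives $q(\mu)\in\widetilde{C}$ without any pointwise moving-average theorem.
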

\begin{proof}
By assumption there is an algorithm which produces a sequence $C_{n}$
of rational polytopes descending to $C$. We may assume that $C\subseteq\Delta$,
where $\Delta$ is the $m$-dimensional simplex
\[
\Delta=\Big\{\underline{p}\in\mathbb{R}^{m+1}\;,\;\sum p_{i}=1\mbox{ and }p_{i}\geq0\mbox{ for all }i\Big\}.
\]
Indeed, we can certainly assume that $C\subseteq[0,1/2m]^{m}$ simply
by scaling by an appropriate rational and translating by a rational
vector (in fact a suitable scaling factor and translation vector can
be computed from the first approximation $C_{1}$, since $C\subseteq C_{1}$).
Now consider the set $C\times\mathbb{R}\subseteq\mathbb{R}^{m}\times\mathbb{R}\cong\mathbb{R}^{m+1}$,
which is clearly effective. Then, since $\Delta$ is effective, the
set $C'=(C\times\mathbb{R})\cap\Delta$ is effective. Furthermore
the projection $\mathbb{R}^{m+1}\rightarrow\mathbb{R}^{m}$ onto the
first $m$ coordinates, which is rational, maps $C'$ to $C$. Thus
if $C'$ is a rational image of a rational face of $\mathcal{P}_{n}^{1}$
then so is $C$.

Henceforth we assume that $C\subseteq\Delta$. 

Let $\Sigma_{0}=\{1,\ldots,m+1\}$, and identify $\mathcal{P}_{0}^{1}$
with $\Delta$ in the obvious manner. We shall construct an effective
subshift $X\subseteq\Sigma_{0}^{\mathbb{Z}}$ such that the collection
of probability vectors $\mathcal{I}(X)\cap\mathcal{P}_{0}^{1}$, consisting
the the one-letter marginals of invariant measures on $X$, is exactly
$C$. 

We introduce some notation. If $E\subseteq\Sigma_{0}^{N}$ is a set
of words of length $N$, let $X_{E}\subseteq\Sigma_{0}^{\mathbb{Z}}$
denote the set of all bi-infinite sequences which are concatenations
of words from $E$. This is a subshift. For $a\in E$ and $i\in\Sigma_{0}$
let
\[
p_{i}(a)=\#\{1\leq j\leq N\,:\, a_{j}=i\}.
\]
These are the empirical frequencies of symbols in $a$, and write
$p(a)=(p_{1}(a),\ldots,p_{m+1}(a))$. Notice that if $C'$ is a closed
convex set and $p(a)\in C'$ for all $a\in E$, then $\pi_{0}\mu\in C'$
for all $\mu\in\mathcal{I}(X_{E})$. Also note that $\pi_{0}^{-1}(C')$
is a closed convex subset of $\mathcal{I}(X_{E})$.

The effectiveness condition means that there is an algorithm computing,
for each $n\in\mathbb{N}$, a vector $v_{n}\in\mathbb{Q}^{m+1}$ and
$r_{n}\in\mathbb{Q}$, such that $x\in C$ if and only if $x\cdot v_{n}\leq r_{n}$
for all $n$, and such that
\[
C_{n}=\{x\,:\, x\cdot v_{i}\leq r_{i}\mbox{ for }1\leq i\leq n\}.
\]
Let $n_{0}$ denote the smallest integer such that $C_{n_{0}}$ is
bounded (we may assume that such an integer exists since $C$ is bounded). Notice
that the extreme points of $C_{n}$ are rational, and can be found
by solving a system of linear equations with rational coefficients,
so are they computable. 

We construct next a sequence $N_{n}\in\mathbb{N}$ and subsets $E_{n}\subseteq\Sigma_{0}^{N_{n}}$
recursively, for $n\geq n_{0}$, satisfying the following properties:
\begin{enumerate}
\item Every $a\in E_{n+1}$ is a concatenation of words from $E_{n}$ (and
in particular $N_{n}|N_{n+1}$).
\item $C_{n}=\conv\{p(a)\,:\, a\in E_{n}\}$.
\end{enumerate}
To begin the recursion, let $n=n_{0}$. Note that the first condition
is vacuous. To satisfy the second take $N_{n_{0}}$ to be large enough
so that the extreme points of $C_{n_{0}}$ can be written with common denominator
$N_{n_{0}}$, and let $E_{n_{0}}$ be the set of words $a\in\Sigma_{0}^{n_{0}}$
such that $p(a)\in\ext(C_{n_{0}})$.

Assuming we have defined $N_{n}$ and $E_{n}$, define $N_{n+1}$
and $E_{n+1}$ as follows. Since $C_{n+1}\subseteq C_{n}$, every
extreme point of $C_{n+1}$ is a convex combination of the extreme
points of $C_{n}$, and since all these points are rational, the coefficients
of these convex combination are rational as well. Choose $N'_{n+1}$
to be a common denominator for all of these weights and let $N_{n+1}=N_{n}N'_{n+1}$.
Let $E_{n+1}$ denote all words $a\in\Sigma_{0}^{N_{n+1}}$ which
are concatenations of words from $E_{n}$ and satisfy $p(a)\in\ext C_{n+1}$.
Every extreme point of $C_{n+1}$ is represented at least once by
our choice of $N_{n+1}$.

It is clear that $X_{E_{n+1}}\subseteq X_{E_{n}}$, and we obtain
a subshift
\[
X=\bigcap_{n=1}^{\infty}X_{E_{n}}.
\]
We also have $\mathcal{I}(X_{E_{n+1}})\subseteq\mathcal{I}(X_{E_{n}})$,
and one easily sees that $\mathcal{I}(X)=\bigcap\mathcal{I}(X_{E_{n}})$.
We next claim that $\pi_{0}\mathcal{I}(X_{E_{n}})=C_{n}$. Indeed,
the inclusion `$\subseteq$' is clear, since if $b$ is a concatenation of blocks
from $E_{n}$ then clearly $p(b)\in C_{n}$ (because the frequency
of each block is in $C_{n}$, and $C_{n}$ is convex); and so by the
ergodic theorem $\pi_{0}\mu\in C_{n}$ for all $\mu\in\mathcal{I}(X_{E_{n}})$,
so $\pi_{0}\mathcal{I}(X_{E_{n}})\subseteq C_{n}$. On the other hand
the extreme points of $C_{n}$ are of the form $p(a)$ for some $a\in E_{n}$,
and if $\mu$ is the unique invariant measure on the periodic point
$\ldots aaa\ldots\in X_{E_{n}}$ clearly $\pi_{0}\mu=p(a)$, so
$\ext C_{n}\subseteq\pi_{0}\mathcal{I}(X_{E_{n}})$
and by convexity $C_{n}\subseteq\pi_{0}\mathcal{I}(X_{E_{n}})$. Thus
we also have relation $C_{n}\subseteq\pi_{0}\mathcal{I}(X_{E_{n}})$,
and consequently $C=\pi_{0}\mathcal{I}(X)$ (note however that we
do not claim this projection is 1-1. In general there will be many
invariant measures on $X$ with a given vector $p$ for the marginal
frequencies).

Finally, the construction of $N_{n}$ and $E_{n}$ is explicit, and
the resulting sequences are recursively enumerable. All that remains
is to show that $X$ is effective. But clearly each $X_{E_{n}}$ is
effective, since for example if we let
\[
L_{n}=\Omega_{0}^{3N_{n}}\setminus\{b'a'a''b''\in\Sigma_{0}^{3N_{n}}\,:\, a',a''\in E_{n}\,,\, b',b''\in\Sigma_{0}^{*}\}
\]
then $X_{E_{n}}=SFT(L_{n})$. Thus
\[
X=\bigcap SFT(L_{n})=SIT\big(\bigcup L_{n}\big).
\]
But $\bigcup L_{n}$ is recursively enumerable, since the languages
$L_{n}$ are finite and the sequence of languages $L_{n}$ is computable.
\end{proof}
This completes the proof of Theorem \ref{thm:face-characterization}.

Finally, Theorem \ref{thm:non-ue-SFT-extreme-points} follows from the following observation. There are effective convex sets with uncountably many points, e.g. the unit disc. These must be rational images of rational faces of $\mathcal{P}_n^2$ for large enough $n$. This implies that $\mathcal{P}_n^2$ has uncountably many extreme points for large enough $n$, and these cannot all arise as projections of invariant measures on uniquely ergodic SFTs, since there are only countably many SFTs. Hence some do not arise in this way.

\section{A proof of Theorem \ref{pro:SFT-with-irrational-masses} based on Penrose tilings}\label{tilings}

In this section we give a second proof of Theorem \ref{pro:SFT-with-irrational-masses} in the
case $d=2$. We first introduce some background and notations and we  refer the reader to \cite{Robinsonsurvey} for a survey on tilings of $\mathbb{R}^d$ and symbolic dynamics.

\subsection{Tiling dynamical systems}

Consider a set of prototiles $P=\{p_1,\ldots,p_{\scriptscriptstyle{|\Sigma|}}\}$ labelled by a finite alphabet $\Sigma$.
By prototile we mean a polyhedron in $\mathbb{R}^d$. A tiling of $\mathbb{R}^d$ constructed with $P$
is a countable set of polyhedra $(t_i)_{i\in\mathbb{Z}}$ labeled by $\Sigma$ such that:
\begin{itemize}
\item[-] The union of the tiles $t_i$'s is $\mathbb{R}^d$;
\item[-] Whenever $i\neq j$, the interiors of $t_i$ and $t_j$ are disjoint;
\item[-] Whenever $t_i\cap t_j\neq \emptyset$ and $i\neq j$, $t_i$ and $t_j$ share a full $(d-1)$-face;
\item[-] For any $i$, there exists $j(i)\in\{1,\ldots,|\Sigma|\}$ and $u_i\in\mathbb{R}^d$ such that
$t_i=p_{j(i)}+u_i$ and the label of $t_i$ is the same as that of $p_{j(i)}$.
\end{itemize}

We also assume that our tiling systems are locally finite: adjacent prototiles can meet in only a finite number of ways in any tiling of $\mathbb{R}^d$. 

Let $\mathcal{T}(P)$ be the set of all tilings constructed with $P$, and let $\bar\Theta$ denote the associated action of $\mathbb{R}^d$ by translation:
$$
\bar\Theta^u(\{t_i\})=\{t_i+u\},\;\textup{for}\;\{t_i\}\in \mathcal{T}(P).
$$
There is a natural metrizable topology for which $\mathcal{T}(P)$
is compact and the action  by translation $\bar\Theta$ is continuous. Colloquially two tiles are close whenever they coincide in a large ball around
the origin, up to a small translation (see  \cite{Robinsonsurvey} for more details about this construction). The pair $(\mathcal{T}(P),\bar\Theta)$ is called a tiling dynamical system.

\subsection{Penrose tiling system}

Consider the `thin' and `fat' triangles displayed in Figure \ref{penrosetriangles} \footnote{It is customary
to use arrowheads to indicate adjecency rules. Each triangle can be represented as a polyhedron by replacing the arrowheads by appropriate dents and bumps to fit the general definition of tilings given above.}. Together with their rotation by multiples of $2\pi/10$, they generate
a set of prototiles $\EuScript{P}$ with 40 elements ($|\Sigma|=40$) which generate the Penrose system $\mathcal{T}(\EuScript{P})$.
Remarkably these triangles also allow to make Penrose tilings by substitution by appropriate inflation rules \cite{GSbook}.
\begin{figure}[h!]
\centering
\includegraphics[width=8cm]{./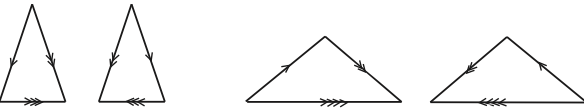}
\caption{The tiles of the Penrose tiling.}
\label{penrosetriangles}
\end{figure}
The dynamical system $(\mathcal{T}(\EuScript{P}),\bar\Theta)$ associated with Penrose tiles system is minimal and uniquely ergodic (see for instance  \cite{Robinson1996}). Actually much more can be said about the ergodic properties of Penrose dynamical systems.\newline
We recall that a van Hove sequence for $\mathbb{R}^2$ is a sequence $\{F_n\}_{n\geq 1}$ of
bounded measurable subsets of $\mathbb{R}^2$ satisfying
$$
\lim_{n\to\infty} \lambda\big(\big(\partial F_n\big)^{+r}\big)\big/ \lambda(F_n)=0,\; \textup{for all}\;r>0
$$
where, for a subset $A$ of $\mathbb{R}^2$, $A^{+r}:=\big\{x\in\mathbb{R}^2 : \textup{dist}(x,A)\leq r\big\}$.

\begin{prop}
Let $\lambda$ be the Lebesgue measure in $\mathbb{R}^{2}$ and consider a van Hove sequence
$\{F_n\}_{n\geq 1}$ in $\mathbb{R}^{2}$. For any  tiling $T$ in $\mathcal{T}(\EuScript{P})$,
any $x$ in $\mathbb{R}^{2}$ and any $n\geq 1$,  let $L^{\fat}_T(x+ F_n)$
(resp.  $L^{\thin}_T(x+ F_n)$) be the number of fat (resp. thin) triangles of
$T$ in  $x+ F_n$. Then the following limits
$$
\nu(\textup{fat}) = \lim_{n\to \infty}\frac{L^{\fat}_T(x+ F_n)}{\lambda(F_n)}
\quad \textup{and}\quad\nu(\textup{thin}) = \lim_{n\to \infty}\frac{L^{\thin}_T(x+ F_n)}{\lambda(F_n)}
$$
exist, are independent of $T$ in $\mathcal{T}(\EuScript{P})$, $x$ in $\mathbb{R}^{2}$
and of the van Hove sequence  $\{F_n\}_{n\geq 1}$, and satisfy:
$$
\frac{\nu(\textup{fat})}{\nu(\textup{thin})}=\frac{1+\sqrt 5}{2}.
$$
\end{prop}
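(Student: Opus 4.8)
The plan is to separate the two assertions. Existence of the limits and all the independence statements will come from unique ergodicity of $(\mathcal{T}(\EuScript{P}),\bar\Theta)$, while the exact value of the ratio will come from the substitution (inflation) structure of the Penrose triangles. For the first part, let $\mu$ be the unique $\bar\Theta$-invariant probability measure on $\mathcal{T}(\EuScript{P})$, and set $\chi_{\fat}(T)=\mathbf{1}[\,0\in\mathrm{int}\,t\text{ for some fat tile }t\text{ of }T\,]$, and analogously $\chi_{\thin}$. First I would check that $\chi_{\fat}$ is continuous at $\mu$-almost every $T$: its discontinuity set is contained in $D=\{T:0\in\partial t\text{ for some tile }t\text{ of }T\}$, and for each fixed $T$ the slice $\{u:\bar\Theta^{u}T\in D\}$ is a Lebesgue-null subset of $\mathbb{R}^2$ (tile boundaries have zero area). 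By Fubini and $\bar\Theta$-invariance of $\mu$ this forces $\mu(D)=0$, so $\chi_{\fat}$ is $\mu$-Riemann integrable.

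The key tool is then the uniform ergodic theorem for uniquely ergodic actions of amenable groups along Følner (in particular van Hove) sequences, which I would apply to the again-van Hove sequence $\{-F_n\}$ to obtain
\[
\sup_{T}\Big|\frac{1}{\lambda(F_n)}\int_{F_n}\chi_{\fat}(\bar\Theta^{-u}T)\,du-\int\chi_{\fat}\,d\mu\Big|\xrightarrow[n\to\infty]{}0.
\]
The integral on the left is exactly the normalized area of the part of $F_n$ covered by fat tiles of $T$. Because $x+F_n$ is again van Hove and translating by $x$ converts the average over $x+F_n$ into an average over $F_n$ of $\bar\Theta^{x}T$, the uniform estimate yields a limit independent of $T$, of $x$, and of the chosen van Hove sequence. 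Since all fat tiles are congruent with common area $A_{\fat}$, the covered area equals $A_{\fat}\,L^{\fat}_T(F_n)$ up to the tiles meeting $\partial F_n$; as tiles have uniformly bounded diameter these lie in a bounded neighbourhood of $\partial F_n$, so the van Hove condition makes their contribution $o(\lambda(F_n))$. Dividing by $\lambda(F_n)$ and letting $n\to\infty$ gives $\nu(\fat)=A_{\fat}^{-1}\!\int\chi_{\fat}\,d\mu$ and likewise $\nu(\thin)=A_{\thin}^{-1}\!\int\chi_{\thin}\,d\mu$, proving existence and all independence claims. (Which convention is used for $L^{\fat}_T(x+F_n)$ — centres inside, tiles fully inside, or tiles meeting the window — affects only an $o(\lambda(F_n))$ boundary term, hence not the limit.)

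For the value of the ratio I would use that $\mathcal{T}(\EuScript{P})$ is a primitive substitution tiling with linear inflation factor $\varphi=(1+\sqrt5)/2$. The inflation rule for the Robinson triangles reads fat $\mapsto 2$ fat $+\,1$ thin and thin $\mapsto 1$ fat $+\,1$ thin, giving the primitive substitution matrix $M=\begin{pmatrix}2&1\\1&1\end{pmatrix}$ on the pair $(\fat,\thin)$. Its characteristic polynomial $\lambda^2-3\lambda+1$ has Perron eigenvalue $\varphi^2=(3+\sqrt5)/2$, consistent with area scaling by $\varphi^2$, and the corresponding Perron eigenvector $v$ satisfies $v_{\fat}/v_{\thin}=\varphi^2-1=\varphi$. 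Primitivity of $M$ implies that the relative tile counts inside the $k$-th order supertiles exhausting any van Hove window converge to this eigenvector direction, so $L^{\fat}_T(F_n)/L^{\thin}_T(F_n)\to\nu(\fat)/\nu(\thin)=\varphi=(1+\sqrt5)/2$; unique ergodicity guarantees these combinatorial frequencies agree with the measure-theoretic ones found above.

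The main obstacle is the first part. One must correctly invoke the uniform mean ergodic theorem for the $\mathbb{R}^2$-action along van Hove sequences and, since the natural counting observable is an indicator rather than a continuous function, justify replacing it by a $\mu$-Riemann integrable one via the step $\mu(D)=0$, while controlling the boundary tiles through the van Hove property. Once this analytic machinery is in place, the determination of the ratio reduces to the routine Perron--Frobenius computation above.
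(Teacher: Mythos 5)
Your proof is correct and follows essentially the same route as the paper: unique ergodicity of the Penrose system yields existence and independence of the limits (the paper simply cites \cite[Theorem 2.7]{LMS}, whereas you reconstruct the needed uniform ergodic theorem for the indicator observable, including the $\mu(D)=0$ and boundary-tile estimates), and the golden-mean ratio comes from the inflation structure (the paper's ``van Hove sequence of successive inflations of a triangle'' is exactly your Perron--Frobenius computation with $M=\left(\begin{smallmatrix}2&1\\1&1\end{smallmatrix}\right)$). No gaps; your write-up just makes explicit what the paper delegates to references.
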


The fact that the limits exist and are independent of the tiling, the reference point and the van Hove sequence, is a direct consequence of unique ergodicity as shown in \cite[Theorem 2.7]{LMS}. The fact that the ratio of these limits is the golden mean can be easily checked by choosing the van Hove sequence made with the series of successive inflations of a given triangle with respect to the substitution rules.

\subsection{Wang tilings}

A special class of tilings is that of Wang tilings, which we describe for $d=2$ for the sake
of simplicity. A Wang prototile is a unit square with sides parallel to the axes, and with colored
edges which restrict adjecency: abutting edges of adjacent tiles must have the same color.\footnote{
The colors can of course be replaced by dents and bumps to fit the above definition of prototiles.}
Consider a finite collection $W$ of Wang prototiles; it gives rise to the a tiling space consisting of all admissible tilings of the plane, i.e. tilings in which the adjecency rules are obeyed. Wang tilings
are closely tied to shifts of finite type because, giving the tiling space, we can translate every tiling so that the vertices of its tiles lie on
the lattice $\mathbb{Z}^2$. The resulting family of tilings can be identified with a collection of configurations in $W^{\mathbb{Z}^2}$, which is easily seen to be an SFT. Thus the tiling space is (conjugate to) the suspension of a shift of finite type $X_W$. This basic fact and its
reciprocal was noticed in \cite{Klaus}. Let us also mention again Berger's theorem:
there is no algorithm which computes for a given $W$ whether the corresponding SFT
is empty or, equivalently, whether $\mathcal{T}(W)=\emptyset$.

The importance of Wang tilings stems from the following result proved by Sadun and
Williams \cite{SadunWilliams2003} which we formulate here for $d=2$, although
it is valid for any dimension:  Let $\mathcal{P}$ be a finite set of prototiles and  $\mathcal{T}(P)$ the associated tiling space. Then there exists a finite set of Wang prototiles $W$ and a homeomorphism $h$ which realizes
an orbit equivalence between the dynamical systems $(\mathcal{T}(P),\bar\Theta)$
and $(\mathcal{T}(W),\bar\Theta)$. In particular, there exists a finite alphabet
$W$ such that the tiling space $\mathcal{T}(P)$ is homeomorphic to
the suspension of the $\mathbb{Z}^2$-SFT $X_W$.

\subsection{From the Penrose tiling system to a uniquely ergodic SFT }

An SFT can be explicitly constructed from the Penrose system, as shown in
\cite{SadunWilliams2003}. 
For the sake of convenience, we sketch the construction given therein in full details.
The first step is to construct a rational tiling space from the original Penrose system by a homeomorphism.
By a rational tiling we mean a tiling whose edge vectors all have rational coordinates. This amounts to solving finitely many
equations with integral coefficients yielding the result by elementary linear algebra. By rescaling
if necessary, we can assume that the obtained rational Penrose tiling is in fact
integral and that each triangle contains at least one unit square whose corners have integer coordinates. Figure \ref{rationalpenrose} displays a patch of the integral Penrose tiling.

\begin{figure}[h!]
\centering
\includegraphics[width=5cm]{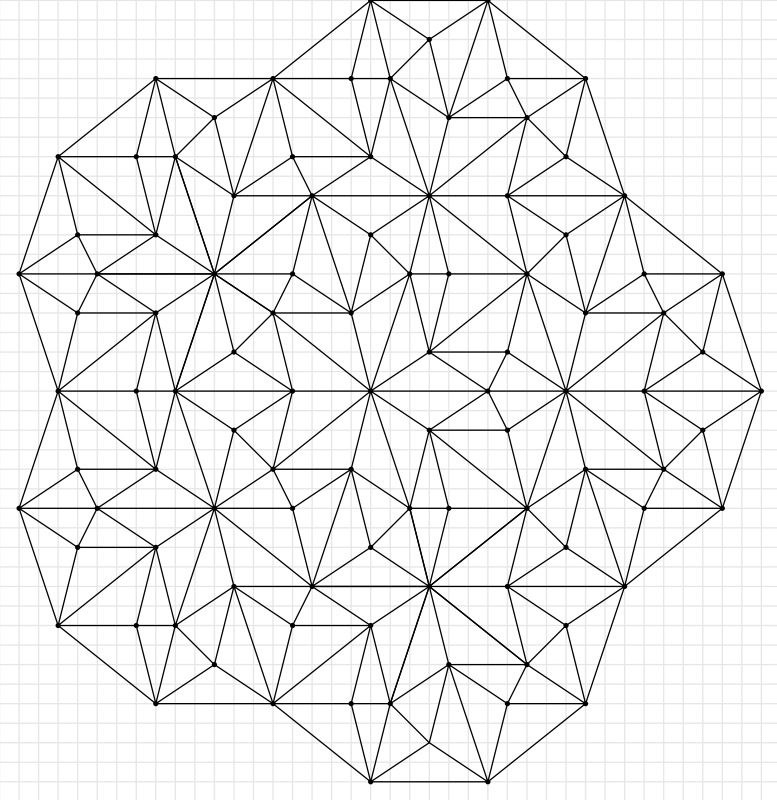}
\caption{A patch of the rational Penrose tiling.}
\label{rationalpenrose}
\end{figure}

Let us denote by  $\EuScript{P}_{\textup{int}}$ the new set of 40 triangles obtained this way and let us keep on calling thin  and fat triangles the respective images of the thin and fat Penrose triangles. Clearly the new dynamical system  $(\mathcal{T}(\EuScript{P}_{\textup{int}}),\bar\Theta)$ is orbit equivalent to $(\mathcal{T}(\EuScript{P}),\bar\Theta)$. The following proposition is a direct consequence of the fact that the homeomorphism that maps a Penrose tiling $T$  on a integral Penrose tiling $T_{\textup{int}}$, maps van Hove sequences on van Hove sequences.
\begin{prop}
\label{Montana}
The new  dynamical system $(\mathcal{T}(\EuScript{P}_{\textup{int}}),\bar\Theta)$
is again minimal and uniquely ergodic.
Furtheremore, for any  tiling $T_{\textup{int}}$ in $\mathcal{T}(\EuScript{P}_{\textup{int}})$,
any $x$ in $\mathbb{R}^{2}$ and any $n\geq 1$,   the following limits
$$
\nu_{int}(\textup{fat})
=\lim_{n\to \infty}\frac{L^{\fat}_{T_{int}}(x+ F_n)}{\lambda(F_n)}
\quad \textup{and}\quad\nu_{\textup{int}}(\textup{thin}) =
\lim_{n\to \infty}\frac{L^{\thin}_{T_{\textup{int}}}(x+ F_n)}{\lambda(F_n)}
$$
exist, are independent on ${T_{\textup{int}}}$ in $\mathcal{T}(\EuScript{P}_{\textup{int}})$, $x$ in $\mathbb{R}^{2}$ and on the van Hove sequence  $\{F_n\}_{n\geq 1}$, and satisfy again:

$$\frac{\nu_{\textup{int}}(\textup{fat})}{\nu_{\textup{int}}(\textup{thin})}\, =\, \frac{1+\sqrt 5}{2}.$$

\end{prop}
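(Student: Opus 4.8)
The plan is to deduce the proposition from its already-established counterpart for the original Penrose system by transporting every assertion through the homeomorphism $h:\mathcal{T}(\EuScript{P})\to\mathcal{T}(\EuScript{P}_{\textup{int}})$, $T\mapsto T_{\textup{int}}=h(T)$. I would rely throughout on two structural features of $h$: it realizes an orbit equivalence, hence carries $\bar\Theta$-orbits to $\bar\Theta$-orbits and preserves the combinatorial type — in particular the fat/thin label — of every tile; and it sends van Hove sequences to van Hove sequences.

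First I would dispose of the qualitative claims. Minimality is immediate: a homeomorphism mapping orbits to orbits also maps orbit closures to orbit closures, so $h$ carries the minimal space $\mathcal{T}(\EuScript{P})$ onto $\mathcal{T}(\EuScript{P}_{\textup{int}})$ while sending each dense orbit to a dense orbit; thus every $\bar\Theta$-orbit of the integral system is dense. For unique ergodicity I would use that an orbit equivalence between free $\mathbb{R}^2$-actions induces a bijection between the simplices of invariant measures of the two systems — for tiling spaces this reflects the fact that a shape deformation changes tile geometry but not the combinatorial patch structure that governs invariant measures, up to the transverse scaling recorded by the orbit cocycle — so the single invariant measure of $\mathcal{T}(\EuScript{P})$ corresponds to a single invariant measure of $\mathcal{T}(\EuScript{P}_{\textup{int}})$. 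Granting unique ergodicity, the existence of $\nu_{\textup{int}}(\textup{fat})$ and $\nu_{\textup{int}}(\textup{thin})$, and their independence of $T_{\textup{int}}$, of $x$, and of the van Hove sequence, follow exactly as in the original proposition, by \cite[Theorem 2.7]{LMS}.

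The value of the ratio is where the van Hove hypothesis on $h$ is essential, and the key observation is that $\nu_{\textup{int}}(\textup{fat})/\nu_{\textup{int}}(\textup{thin})$ is combinatorial rather than metric, so the area distortion produced by $h$ cancels. Because the two densities exist and are independent of the van Hove sequence (in the strong form of \cite[Theorem 2.7]{LMS}, valid for arbitrary van Hove sequences), it suffices to evaluate the ratio along one convenient sequence. Fix $T$ with $T_{\textup{int}}=h(T)$, write the restriction of $h$ to the free orbit of $T$ as a homeomorphism $g_T$ of $\mathbb{R}^2$ satisfying $h(\bar\Theta^u T)=\bar\Theta^{g_T(u)}T_{\textup{int}}$, choose any van Hove sequence $R_n\subseteq\mathbb{R}^2$, and put $F_n':=g_T(R_n)$, which is again van Hove by the hypothesis on $h$. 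The label-preserving tile bijection between $T$ and $T_{\textup{int}}$ identifies the fat (resp.\ thin) triangles of $T$ meeting $R_n$ with those of $T_{\textup{int}}$ meeting $F_n'$, up to the tiles lying in a bounded collar of the boundary. Hence $L^{\fat}_{T_{\textup{int}}}(F_n')/L^{\thin}_{T_{\textup{int}}}(F_n')$ and $L^{\fat}_T(R_n)/L^{\thin}_T(R_n)$ share the same limit, which by the original proposition equals $\nu(\textup{fat})/\nu(\textup{thin})=(1+\sqrt5)/2$. Since $\nu_{\textup{int}}(\textup{thin})>0$, this forces $\nu_{\textup{int}}(\textup{fat})/\nu_{\textup{int}}(\textup{thin})=(1+\sqrt5)/2$.

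The main obstacle, and the step needing genuine care, is the counting comparison just invoked: one must show that the fat (resp.\ thin) tiles of $T$ meeting $R_n$ and those of $T_{\textup{int}}$ meeting $F_n'$ differ only by a number of tiles that is negligible compared with the total tile count in the van Hove limit. This is precisely where local finiteness — tiles of uniformly bounded diameter meeting in finitely many ways — combines with the preservation of van Hove sequences by $h$ to bound the boundary collar by $o(\lambda(F_n'))$. Once this boundary layer is seen to be asymptotically negligible, the convergence of the ratio is dictated by the planar proposition, and no separate control of the individual, area-dependent densities $\nu_{\textup{int}}(\textup{fat})$ and $\nu_{\textup{int}}(\textup{thin})$ is needed.
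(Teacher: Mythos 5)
Your proposal is correct and takes essentially the same approach as the paper, whose entire proof is the one-sentence observation that the proposition is a direct consequence of the fact that the homeomorphism $h$ carrying a Penrose tiling to its integral counterpart maps van Hove sequences to van Hove sequences --- precisely the mechanism you exploit to transport minimality, unique ergodicity, and the fat/thin count ratio (which is combinatorial, so the area distortion of $h$ cancels). Your boundary-collar estimate and the correspondence of invariant measures under the orbit equivalence merely fill in details the paper leaves implicit.
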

The next step is to transform the rational Penrose tiling into a Wang tiling.  The basic idea is to replace the straight edges of the triangles
with zig-zags, that is with sequences of unit displacements in the coordinates directions. The
next figure shows how the previous patch is transformed.

\begin{figure}[h!]
\centering
\includegraphics[width=5cm]{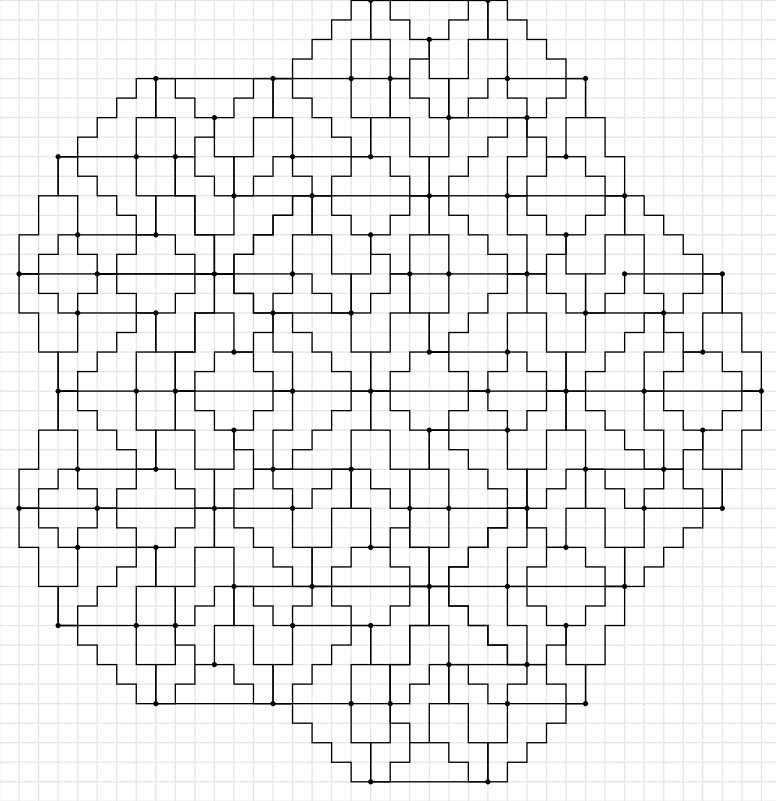}
\caption{A patch of the square Penrose tiling.}
\label{penrosecarrele}
\end{figure}

It remains to put appropriate colors on the edges of the squares to obtain a Wang tiling.
Colors are used to encode each of the $40$ prototiles and to encode the
matching rules between them.  Let $W$ be the set of $\vert W\vert$ Wang tiles constructed this way. Clearly the two dynamical systems $(\mathcal{T}(\EuScript{P}_{int}),\bar\Theta)$ and $(\mathcal{T}(W),\bar\Theta)$ are conjugate and thus  $(\mathcal{T}(W),\bar\Theta)$ is minimal and uniquely ergodic. In each triangle of
$\EuScript{P}_{\textup{int}}$ choose one Wang tile.  We get this way a collection of 20 distinct Wang tiles chosen in fat triangles: $W^{\fat} = \{w^{\fat}_1, \dots, w^{\fat}_{20}\}$ and $20$ distinct Wang tiles chosen in thin triangles:
$W^{\thin}= \{w^{\thin}_1, \dots, w^{\thin}_{20}\}$. Let $\mu$ be the unique translation invariant measure of the associated SFT $X_W$. As a by-product of Proposition \ref{Montana}, we get:
$$
\frac{\sum_{i=1}^{i =20}\mu([w^{\fat}_i])}{\sum_{i=1}^{i =20}\mu([w^{\thin}_i])}\, =\, \frac{1+\sqrt 5}{2}.
$$
Consequently there exists a pair $(i, j)$ in $\{1, \dots , 20\}^2$ such that 
$$
\frac{\mu([w^{\fat}_i])}{\mu([w^{\thin}_j])}\, \notin \mathbb{Q}.
$$

Thus we have construct a uniquely ergodic SFT on a finite alphabet with $\vert W\vert $ letters for which the ratio of the measures 2 cylinders with size 1 is irrational. This shows Theorem  \ref{pro:SFT-with-irrational-masses}  for an alphabet with $\vert W\vert $ letters.  By replacing individual symbols in $W$  by square words of fixed size written in any given finite alphabet with the property that 
that any subsequence of a concatenation of them
has a unique parsing, we can show Theorem  \ref{pro:SFT-with-irrational-masses}   for any finite alphabet. 

\bibliographystyle{plain}
\bibliography{bib}

\end{document}